\documentclass[11pt]{article}

\usepackage{amsthm, amsmath, amssymb, amsfonts, url, booktabs, tikz, setspace, fancyhdr, bm}
\usepackage[margin=0.9in]{geometry}
\usepackage{hyperref, enumerate}
\usepackage[shortlabels]{enumitem}
\usepackage[babel]{microtype}
\usepackage[english]{babel}
\usepackage[capitalise]{cleveref}
\usepackage{comment}
\usepackage{bbm}
\usepackage{csquotes}
\usepackage{graphicx}
\usepackage{float}
\usepackage[dvipsnames]{xcolor}
\usepackage{soul}
\usepackage{mathtools}
\usepackage[normalem]{ulem}
\usepackage{titlesec}

\usetikzlibrary{
    positioning,
    arrows.meta,
    shapes.geometric,
    decorations.pathmorphing,
    decorations.pathreplacing,
    fit,
    backgrounds
}

\titlespacing*{\section}
  {0pt}{2.5ex plus .5ex minus .2ex}{1.2ex plus .2ex}

\titlespacing*{\subsection}
  {0pt}{2ex plus .4ex minus .2ex}{0.8ex plus .2ex}

\counterwithin{figure}{section}
\numberwithin{equation}{section}

\newtheorem{theorem}{Theorem}[section]

\newtheorem{conjecture}[theorem]{Conjecture}
\newtheorem{question}[theorem]{Question}
\newtheorem{lemma}[theorem]{Lemma}

\newtheorem{claim}[theorem]{Claim}

\theoremstyle{definition}
\newtheorem{defn}[theorem]{Definition}
\newtheorem*{defn-non}{Definition}

\newlist{Case}{enumerate}{2}
\setlist[Case,1]{%
    label={\bfseries Case \arabic*.},
    labelindent=1em,
    labelwidth=1.3cm,
    labelsep*=1em,
    leftmargin=!
}
\setlist[Case,2]{%
    label={\bfseries Subcase \arabic{Casei}.\arabic*.},
    labelindent=-1em,
    labelwidth=1.3cm,
    labelsep*=1em,
    leftmargin=!
}

\newcommand{\ceil}[1]{\lceil #1\rceil}
\newcommand{\floor}[1]{\lfloor #1\rfloor}

\newcommand{\eps}{\varepsilon}
\newcommand{\E}{\mathbb E}
\newcommand{\PP}{\mathbb P}

\newcommand{\Rind}{\widehat R_{\mathrm{ind}}}

\crefname{theorem}{Theorem}{Theorems}
\Crefname{theorem}{Theorem}{Theorems}
\crefname{lemma}{Lemma}{Lemmas}
\Crefname{lemma}{Lemma}{Lemmas}
\crefname{conjecture}{Conjecture}{Conjectures}
\Crefname{conjecture}{Conjecture}{Conjectures}
\crefname{corollary}{Corollary}{Corollaries}
\Crefname{corollary}{Corollary}{Corollaries}
\crefname{proposition}{Proposition}{Propositions}
\Crefname{proposition}{Proposition}{Propositions}
\crefname{remark}{Remark}{Remarks}
\Crefname{remark}{Remark}{Remarks}

\hypersetup{
  pdftitle={Spread Methods for Induced Cycles},
  pdfsubject={Induced cycles, induced size--Ramsey numbers, and random regular graphs},
  pdfkeywords={induced cycle, induced size--Ramsey number, odd cycle, hypergraph containers, random regular graph}
}

\title{Spread Methods for Induced Cycles}

\author{
Lanchao Wang\thanks{School of Mathematics, Nanjing University, Nanjing, China, and ECOPRO, Institute for Basic Science, Daejeon, Korea. Email: lanchaowang@foxmail.com.}
\and
Xiaolin Wang\thanks{School of Mathematics and Statistics, Fuzhou University, Fuzhou, China. Email: xiaolinw@fzu.edu.cn.}
}

\date{}

\begin{document}

\maketitle\begin{abstract}
We develop a spread-based approach to finding induced cycles and apply it
to two problems. First, we resolve the odd-hole gadget conjecture of
Brada\v{c}, Dragani\'c and Sudakov by constructing an $e^{O(k)}$-edge
graph whose every $k$-edge-colouring contains a monochromatic induced odd
cycle of length $O(\log k)$. As a consequence, for every $k\ge2$ and every
sufficiently large odd $n$,
$$
\widehat R_{\mathrm{ind}}(C_n;k)=e^{\Theta(k)}n.
$$
The proof uses spread probability weights together with hypergraph
containers. Second, we prove that for every sufficiently large fixed $d$,
with high probability the largest hole in the random $d$-regular graph
$G_{n,d}$ has order $\Theta(n\log d/d)$, resolving a problem of Frieze.
Although the two proofs use different mechanisms, both begin with a
well-distributed auxiliary object and use it to control the extra edges
that could destroy inducedness.
\end{abstract}

\section{Introduction}
Induced cycles, usually called \emph{holes} when their length is at least
four, play a central role in graph theory. For example, the celebrated
Strong Perfect Graph Conjecture, proved by Chudnovsky, Robertson, Seymour,
and Thomas~\cite{CRST}, states that a graph is perfect if and only if it
contains neither an odd hole nor an odd antihole.  Induced cycles have also been widely studied in extremal graph theory,
particularly from the viewpoint of counting and maximising them; see, for
example,~\cite{BHLP,HT,KNV,Lidicky,MS,PG}.

In this paper, we develop a spread-based framework for finding induced
cycles. The use of spread measures goes back to Talagrand~\cite{Talagrand};
roughly speaking, a probability weight is spread if its mass is not too
concentrated on any small prescribed configuration. Rather than trying to
find a single induced cycle directly, we consider many candidate cycles
simultaneously and place a spread weight on them. The spread condition
prevents their possible chords from concentrating on a small set of edges.
This allows us to thin the family while retaining enough weight, and
eventually force one of the surviving cycles to be induced.

Spreadness has previously been used in connection with induced cycles by
D\v{z}avoronok, Gabsdil, Mylet, Popa and Shao~\cite{DGMPSh}. Their approach
uses spreadness to control how often vertices are visited by the random walks
from which the cycles are constructed, and hence controls possible chords
indirectly. Our approach is different: we place a spread probability weight
directly on the candidate cycles and use it to control their possible chords.

More broadly, a central difficulty in finding induced structures is to first
produce a large or monochromatic candidate and then eliminate the extra
edges that destroy inducedness. This suggests separating the two tasks by
first producing a well-distributed auxiliary object and then using it to
control the unwanted edges.

We apply this viewpoint to two rather different problems. First, we resolve
the odd-hole gadget conjecture of Brada\v{c}, Dragani\'c and
Sudakov~\cite{BDS}, from which the sharp bound
$\widehat R_{\mathrm{ind}}(C_n;k)=e^{\Theta(k)}n$ for sufficiently large
odd $n$ follows as a corollary. Second, we determine the correct order of the
largest hole in a random regular graph, resolving Problem~77 in the survey of
Frieze~\cite{FriezeSurvey}. The two proofs implement this principle in
different ways. We believe that this viewpoint may also be useful for other
problems concerning induced cycles and induced structures.

\subsection{Induced size--Ramsey numbers of odd cycles}
The $k$-colour size--Ramsey number of a graph $H$, denoted by
$\widehat R(H;k)$, is the minimum number of edges in a graph $G$ such that
every $k$-edge-colouring of $G$ contains a monochromatic copy of $H$.
A natural strengthening is obtained by requiring the monochromatic copy to
be induced in the host graph. The corresponding parameter, denoted by
$\widehat R_{\mathrm{ind}}(H;k)$, is the minimum number of edges in a graph
$G$ such that every $k$-edge-colouring of $G$ contains a monochromatic copy
of $H$ which is induced in $G$. Clearly, every induced monochromatic copy is also a monochromatic copy, and hence
$
\widehat R(H;k)\le \widehat R_{\mathrm{ind}}(H;k).
$

The finiteness of the induced Ramsey number was proved independently by
Deuber, Erd\H{o}s, Hajnal and Pósa, and R\"odl~\cite{Deuber,EHP,Rodl}.
Paths and cycles are among the most basic examples. For paths, a substantial
line of work has studied both the ordinary and induced size--Ramsey numbers;
see~\cite{Beck,DGK,DK,DP,DPnote,GH,HKL,Krivelevich}. In particular,
Beke, Li and Sahasrabudhe~\cite{BLS} recently proved
$\widehat R(P_n;k)=\Theta(k^2\log k)n$ whenever $n\ge100\log k$.

 The ordinary size-Ramsey numbers of cycles are also well understood and,
somewhat surprisingly, exhibit rather different behaviour according to the
parity. For even cycles, a sequence of works
\cite{BDS,JKM,JM,WangWangEvenCycles} led to the sharp bound
$\widehat R(C_n;k)=\Theta(k^2\log k)n$ whenever $n\ge100\log k$. For odd
cycles, a sequence of works~\cite{BDS,JM} led to the sharp bound
$\widehat R(C_n;k)=e^{\Theta(k)}n$ for sufficiently long odd $n$.

We now turn to the induced setting. Haxell, Kohayakawa and
{\L}uczak~\cite{HKL} proved
$\widehat R_{\mathrm{ind}}(C_n;k)=\Theta_k(n)$ for every fixed $k$.
Their argument relies on the regularity lemma and therefore yields a
tower-type dependence on the number of colours.
The dependence on $k$ again differs sharply according to the
parity.
For even cycles, Brada\v{c}, Dragani\'c and Sudakov~\cite{BDS} obtained
$
\widehat R_{\mathrm{ind}}(C_n;k)=O(k^{102})n.
$
For odd cycles, they proved
$
\widehat R_{\mathrm{ind}}(C_n;k)=e^{O(k\log k)}n
$
and conjectured that this could be improved to
$
\widehat R_{\mathrm{ind}}(C_n;k)=e^{O(k)}n
$
for sufficiently large odd $n$. This would be best possible up to the
constant in the exponent, since
$\widehat R(C_n;k)=e^{\Theta(k)}n$ for sufficiently long odd cycles
$C_n$~\cite{BDS,JM}.

To approach this problem, Brada\v{c}, Dragani\'c and Sudakov~\cite{BDS}
formulated the following odd-hole gadget conjecture.
\begin{conjecture}[\cite{BDS}]\label{conj:BDS-gadget}
For every integer $k$, there is a graph $G$ with $e^{O(k)}$ edges which,
for any $k$-colouring of its edges, contains a monochromatic odd cycle of
length at least $5$ as an induced subgraph.
\end{conjecture}

We prove a stronger form of \cref{conj:BDS-gadget}: the
monochromatic induced odd cycle can always be chosen to have logarithmic
length.

\begin{theorem}\label{thm:gadget}
For every integer $k\ge3$, there is a graph $F_k$ with
$e(F_k)\le e^{O(k)}$ whose every $k$-edge-colouring contains a
monochromatic induced cycle $C_h$ for some odd integer
$
5\le h\le4\ceil{\log_2 k}.
$
\end{theorem}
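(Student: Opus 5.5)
We build $F_k$ as a graph that is not $k$-colourable in a robust sense and has large odd girth locally. The standard way to force a monochromatic odd cycle is chromatic number: if $\chi(F)>2^k$, then in any $k$-edge-colouring some colour class is non-bipartite. The union of $k$ bipartite graphs is $2^k$-colourable via the product colouring. We also need the cycle to be short, of length at most $4\lceil\log_2 k\rceil$, and induced. So the plan is to take $F_k$ to be a random-like graph on $N=e^{O(k)}$ vertices with bounded degree $D=O(1)$ or $D=e^{O(k)}$, chosen so that $\chi$ exceeds $2^k$ robustly.

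\textbf{Robust chromatic step.} In any $k$-colouring, consider the colour classes $G_1,\dots,G_k$. Suppose every colour class has no short odd cycle, meaning odd girth greater than $L=4\lceil\log_2 k\rceil$. Then each $G_i$ is "locally bipartite". We want to derive a contradiction from a robust chromatic statement: for instance, that $F_k$ has no independent set of size $N/2^{k+1}$, or an expansion property. The natural tool, hinted at by the abstract, is hypergraph containers. Take the hypergraph $\mathcal H$ whose edges are the short odd cycles, of length $5\le h\le L$, of $F_k$. A colour class with no monochromatic short odd cycle is an independent set in $\mathcal H$, viewed on edge set $E(F_k)$. By containers, each colour class lies in one of few containers $C$, each containing few short odd cycles. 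Summing over the $k$ classes, some container must hold at least $e(F)/k$ edges.

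\textbf{Spread weights for inducedness.} Containment of a monochromatic short odd cycle is not enough; it must be induced. Here the spread method enters. Place a spread probability weight $\mu$ on the short odd cycles of $F_k$ inside a dense container or colour class. Since $F_k$ is chosen random-like, with girth structure controlled and few short cycles through any edge, the possible chords of a cycle $C$ are edges of $F_k$ joining two vertices of $C$. Each chord creates, together with an arc of $C$, a shorter cycle. If $C$ is a \emph{shortest} odd cycle in its colour class, then any chord splits $C$ into a shorter odd cycle and an even cycle. The odd piece then uses the chord, which has a different colour, or else contradicts minimality. So we iteratively pass to shorter odd cycles. Spreadness guarantees that at each stage the mass on cycles with chords in a given small edge set is small. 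We can therefore thin the family while retaining weight, and terminate in a chordless odd cycle of length at least 5. A triangle is excluded by taking $F_k$ triangle-free: delete triangles from the random graph. Length 3 and short-chord configurations are rare by choosing $D$ appropriately.

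\textbf{Main obstacle.} The main obstacle is the quantitative balance. We need $e(F_k)=e^{O(k)}$, not $e^{O(k\log k)}$, while guaranteeing that within a single colour class the dense non-bipartite part has an odd cycle of length $O(\log k)$ carrying a spread weight. I would first try $F_k=G(N,p)$ with $N=C^k$ and $pN$ a large constant power of $2^k$. I would then use expansion: a colour class of density $\ge 1/k$ in a container, after cleaning to min-degree $\ge d/k$, has diameter $O(\log k)$ in the relevant scale. BFS layers then yield an odd cycle of length at most $2\cdot\mathrm{radius}+1\le 4\lceil\log_2 k\rceil$. The spread weight can be taken uniform over BFS-generated cycles. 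The container step supplies the needed non-bipartiteness with only exponential-in-$k$ loss.
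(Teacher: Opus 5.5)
There is a genuine gap, and it sits exactly where the theorem has content. Without inducedness the statement is immediate: $K_{16^k}$ has $e^{O(k)}$ edges, and by \cref{lem:short-cycle} every $k$-colouring of it has a monochromatic odd $C_h$ with $5\le h\le4\ceil{\log_2 k}$. So the whole difficulty is making the cycle chordless in $F_k$ with respect to chords of \emph{every} colour, and your proposal has no mechanism for this. Your minimality argument only excludes chords of the cycle's own colour. If a shortest odd cycle of colour $c$ has a chord of colour $c'\ne c$, the shorter odd cycle through that chord is not monochromatic, so there is nothing to iterate on. An adversarial colouring can give every short monochromatic odd cycle a chord of another colour. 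The appeal to spreadness does not repair this. The potential chords of a cycle are all edges of $F_k$ spanned by its vertices, not a small fixed edge set. Nothing you say controls how the possible chords of a weight on the adversarially determined monochromatic cycles of a fixed sparse $F_k$ meet $E(F_k)$. Your chromatic-number, container and BFS steps are also unsound as written. A container with $e(F)/k$ edges may be bipartite, so that is no contradiction; what is needed is that no $k$ containers cover $E(F)$, which you do not establish. A cleaned small-diameter piece also need not be non-bipartite. These flaws, however, concern only the easy non-induced part.

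The missing idea is to measure chords against the ambient $K_M$, with $M=B^{20}$ and $B=16^k$. Let $c_\mu(xy)$ be the weight of cycles having $xy$ as a possible chord, and $\operatorname{ch}(\mu)=\sum_{xy\in E(K_M)}c_\mu(xy)^2$. If every weighted cycle has a chord in $F$, Cauchy--Schwarz gives $1\le\sum_{xy\in E(F)}c_\mu(xy)\le\sqrt{e(F)\operatorname{ch}(\mu)}$ (\cref{lem:chord-test}). Hence a colour class supporting a probability weight with $\operatorname{ch}(\mu)\le B^{-37}$, in a host with $e(F)\le2B^{36}$, contains an induced cycle. Such chord-light sets come from a spread weight on $h$-cycles of $K_M$ by keeping only the cycles inside a $p$-random edge set, with $p=B^{-9}$. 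The cycles become concentrated on a sparse set, but their possible chords are not thinned, so the renormalised chord score stays at most $B^{-37}$ (\cref{lem:conditional-thinning}). That lemma also tolerates conditioning on a decreasing event, via Harris's inequality. To get one $F$ that works for all colourings, the paper compresses all non-chord-light edge sets into at most $\exp(B^{32})$ containers, none of which supports a spread weight. This uses a conditional container lemma for the non-uniform family of minimal chord-light sets, followed by a uniform container lemma for each $h$. By \cref{lem:many-uncovered}, any $k$ such containers miss at least $B^{-3}\binom M2$ edges of $K_M$. So a greedily chosen $F$ with $e(F)\le2kB^{35}$ is covered by no $k$ of them, and in any colouring of $F$ some colour class is chord-light.
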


Brada\v{c}, Dragani\'c and Sudakov~\cite{BDS} observed that their odd-hole
gadget conjecture implies the desired $e^{O(k)}n$ upper bound for
sufficiently large odd cycles. Hence \cref{thm:gadget}, together with the
ordinary size--Ramsey lower bound, gives the following.

\begin{theorem}\label{thm:long-cycles}
For every $k\ge2$, there exists $n_0=n_0(k)$ such that every odd
$n\ge n_0$ satisfies \vspace{-0.1cm}
\[
\Rind(C_n;k)=e^{\Theta(k)}n.
\]
\end{theorem}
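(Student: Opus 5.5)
The proof splits into the upper bound and the lower bound, and both are reductions to results already available.

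For the lower bound, note that every monochromatic copy of $C_n$ induced in $G$ is in particular a monochromatic copy of $C_n$. Hence $\Rind(C_n;k)\ge \widehat R(C_n;k)$. The ordinary size--Ramsey bound $\widehat R(C_n;k)=e^{\Theta(k)}n$ for sufficiently long odd $n$, due to \cite{BDS,JM}, then gives $\Rind(C_n;k)\ge e^{\Omega(k)}n$ once $n$ is large in terms of $k$. If that lower bound is only stated for $k$ at least some constant, the small values $k\ge2$ are handled by the trivial bound $\Rind(C_n;k)\ge n$, after adjusting the implicit constant.

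For the upper bound, I would invoke the reduction of Brada\v{c}, Dragani\'c and Sudakov \cite{BDS}. Their theorem states that if for every $k$ there is a graph with $g(k)$ edges whose every $k$-colouring contains a monochromatic induced odd cycle of length at least $5$, then $\Rind(C_n;k)\le \mathrm{poly}(k)\,g(k)\,n$ for all sufficiently large odd $n$. Their $e^{O(k\log k)}n$ bound came from a gadget with $g(k)=e^{O(k\log k)}$.

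\Cref{thm:gadget} supplies such a gadget $F_k$ with $e(F_k)\le e^{O(k)}$ for $k\ge3$. Its induced odd cycles have length between $5$ and $4\ceil{\log_2k}$, which is exactly what the reduction requires. For $k=2$ one can apply \cref{thm:gadget} with $k=3$, since a $2$-colouring is a special case of a $3$-colouring, or simply use the Haxell--Kohayakawa--{\L}uczak bound $\Theta_k(n)$ with $k$ fixed. Substituting $g(k)=e^{O(k)}$ gives $\Rind(C_n;k)\le \mathrm{poly}(k)e^{O(k)}n=e^{O(k)}n$ for all odd $n\ge n_0(k)$.

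The main point needing care is to confirm that the \cite{BDS} reduction is stated in this black-box form: that the gadget's edge count enters only multiplicatively, with at most polynomial-in-$k$ overhead, and that $n_0(k)$ depends only on $k$. That reduction, in outline, blows up an expander-type construction and attaches copies of the gadget. If it only gives $g(k)^{O(1)}n$, the bound is still $e^{O(k)}n$. So the conclusion is robust as long as the dependence on the gadget size is at most polynomial.
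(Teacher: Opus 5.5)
Your proposal is correct and follows essentially the same route as the paper. The paper also gets the lower bound from $\widehat R(C_n;k)\le\Rind(C_n;k)$ and the known $e^{\Theta(k)}n$ bound for long odd cycles, and gets the upper bound by inserting the $e^{O(k)}$-edge gadget of \cref{thm:gadget} into the Brada\v{c}--Dragani\'c--Sudakov reduction; your treatment of $k=2$ by reusing the $k=3$ gadget is a harmless detail that the paper leaves implicit.
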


\subsection{Large holes in random regular graphs}

Large holes in sparse random graphs have been studied since the 1980s;
see, for example,~\cite{DS,FJ,Luczak-hole}. For the binomial random graph
$G(n,d/n)$, Dragani\'c, Glock and Krivelevich~\cite{DGKhole} determined
the asymptotic order precisely, proving that for sufficiently large $d$ the
largest hole has order $(2+o_d(1))n\log d/d$.

The corresponding problem for random regular graphs was already considered
by Frieze and Jackson~\cite{FJ} in 1987. They proved, in particular, that
$G_{n,d}$ typically contains a hole of order $\Omega(n/d^2)$ for large
fixed $d$.  Determining the correct order of the largest hole in $G_{n,d}$ was later
recorded explicitly by Frieze in~\cite[Problem~77]{FriezeSurvey}.  Recently, Diskin, Krivelevich, Markbreit
and Zhukovskii~\cite{DKMZ} improved the lower bound to $\Omega(n/d)$.
On the other hand, the independence number gives the upper bound
$O(n\log d/d)$. In this paper, we close this remaining logarithmic gap.

For a graph $G$, let $L_{\mathrm{hole}}(G)$ denote the maximum order of an
induced cycle in $G$, and let $G_{n,d}$ be the uniformly random $d$-regular
graph on $[n]$.

\begin{theorem}\label{thm:random-regular-hole}
For every sufficiently large fixed integer $d$, as $n\to\infty$ through
integers satisfying $2\mid nd$, with high probability
\[
L_{\mathrm{hole}}(G_{n,d})
=
\Theta\!\left(\frac{n\log d}{d}\right).
\]
\end{theorem}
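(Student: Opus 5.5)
The upper bound comes from independence numbers. Every hole $C$ of order $h$ in $G$ contains an independent set of size $\floor{h/2}$, since $C$ is induced and a cycle of length $h$ has independence number $\floor{h/2}$. Hence $L_{\mathrm{hole}}(G)\le 2\alpha(G)+1$. A standard first-moment computation in the configuration model, transferred to $G_{n,d}$ by contiguity (or via the known results of Frieze--\L uczak and Bollob\'as), gives $\alpha(G_{n,d})\le (2+o_d(1))n\log d/d$ whp. So the whole difficulty is the matching lower bound: a hole of order $cn\log d/d$.

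For the lower bound I would follow the philosophy described in the introduction: first build a well-distributed auxiliary object, then use it to control chords. Fix a small constant $c$. The plan has four steps.

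\begin{enumerate}
\item Run a randomized greedy process (or use the known whp structure) to find a large induced forest, or better a large induced \emph{linear forest}, in $G_{n,d}$ on about $c n\log d/d$ vertices. Greedy independent-set algorithms already give independent sets of size $(1+o(1))n\log d/d$. Growing induced paths greedily, adding a neighbour of the current endpoint only if it has no other neighbour among chosen vertices, is the analogue of the Frieze--Jackson argument. The gain over $\Omega(n/d)$ must come from allowing each path to take steps through \emph{non-adjacent} vertices of a sparse auxiliary structure.
\item Concretely, pick a random set $S$ of density $p=\Theta(\log d/d)$ and expose $G[S]$. Typically $G[S]$ behaves like a sparse random graph with average degree about $dp=\Theta(\log d)$. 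In the spirit of Dragani\'c--Glock--Krivelevich for $G(n,d/n)$, long induced paths of order $\Theta(|S|)$ exist in such sparse graphs, for instance via DFS or a Pos\'a-type rotation restricted to vertices with few neighbours in the already-built path.
\item Close the path into a cycle. To do this, keep the two ends of many candidate long induced paths and use the remaining unexposed randomness of the configuration (switchings) to find a short connecting path outside $S$ whose internal vertices have no neighbours on the long path except at the endpoints.
\item Place a spread probability weight on the family of candidate closing paths, or candidate cycles. Any fixed vertex or pair is then hit with small probability, so the expected number of chords created is small, and we can thin the family while keeping positive weight, leaving an induced cycle.
\end{enumerate}

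The main obstacle is the closing step, because $G_{n,d}$ lacks independence. Edges inside $S$ are not independent of edges leaving $S$, and regularity forces every vertex to have exactly $d$ neighbours, so chords between the connector and the long path are correlated with how the path was built. I would handle this with the configuration model, exposing pairings adaptively: build the path by exploring half-edges, so that every vertex outside the path has most of its half-edges still unpaired. Then bound the number of chords through switchings, showing that the spread family of closing paths is rarely spoiled. I would also check that the small-subgraph conditioning and contiguity step transfers the whp statement from the pairing model to $G_{n,d}$. Since $d$ is fixed, this amounts to conditioning on simplicity, which has probability bounded away from $0$.
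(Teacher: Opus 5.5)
\paragraph{There is a genuine gap, in Step~2.} Your upper bound is correct and matches the paper's. The lower bound fails in Step~2, before the closing problem you single out.

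Take a set $S$ of density $p$ chosen independently of $G=G_{n,d}$. Every induced cycle of $G[S]$ has order at most $2\alpha(G[S])+1$. The first-moment bound $\binom nt p^t e^{-(1+o(1))dt^2/(2n)}$ for independent $t$-sets of $G$ lying inside $S$ gives, whp, $\alpha(G[S])\le(2+o_d(1))\,n\log(dp)/d$. With $dp=\Theta(\log d)$ this is $O(n\log\log d/d)=o(|S|)$. So:
\begin{itemize}
\item The induced path of order $\Theta(|S|)$ that you want does not exist. Dragani\'c--Glock--Krivelevich give order $\Theta(m\log D/D)$ in average degree $D$, not $\Theta(m)$.
\item Even a perfectly executed closing step would only produce a hole of order $O(n\log\log d/d)$.
\item An induced path of order $\Theta(|S|)$ needs $dp=O(1)$, i.e.\ $|S|=O(n/d)$. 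That only recovers the $\Omega(n/d)$ bound of Diskin--Krivelevich--Markbreit--Zhukovskii.
\item Reaching order $n\log d/d$ inside $S$ forces $dp\ge d^{\Omega(1)}$. Then $G[S]$ is again a random graph of growing degree in which you need a hole of relative order $\log(dp)/(dp)$, which is the original problem.
\end{itemize}
So no reservoir independent of $G$ can supply the missing $\log d$ factor. Steps~1, 3 and~4 (greedy linear forests, switchings, spread weights on connectors) do not address this counting obstruction.

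\paragraph{The missing idea.} The reservoir must be correlated with $G$: it should destroy almost all edges inside it while still looking random to the remaining edges. Your remark about a sparse auxiliary structure points this way, but it has to be made concrete as follows.

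The paper realises $G_{n,d}$ as $H\cup K$, with $H\sim G_{n,r}$ and $K\sim G_{n,s}$ independent, $r\approx d/\log d$ and $s=d-r$. The transfer to $G_{n,d}$ is by conditioning on edge-disjointness (an event of probability bounded below) and then applying Wormald's contiguity result.

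It takes $I$ uniformly among the independent $X$-sets of $K$, where $X\sim\mathrm{Bin}(n,(1+\eps)/r)$. By vertex-symmetry of $G_{n,s}$, and since $I$ is independent of $H$, $I$ is distributed as a $p$-random set except on the event $\{X>\alpha(K)\}$. That event is rare because $(1+\eps)n/r<(2-o(1))n\log s/s\le\alpha(K)$.

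Then $G[I]=H[I]$ has average degree about $1+\eps$. Friedman's theorem makes $H$ an $(n,r,3\sqrt r)$-graph. The DKMZ result for random subsets of spectral expanders then yields an induced cycle of order $cn/r=\Theta(n\log d/d)$ in $H[I]$. This cycle remains induced in $H\cup K$ because $K[I]$ is empty. No closing or switching argument is needed.
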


\cref{thm:random-regular-hole} shows that
$
L_{\mathrm{hole}}(G_{n,d})
=
\Theta(\alpha(G_{n,d})).
$
Thus, up to a constant factor, the independence number is the only
obstruction to the existence of a long induced cycle in a random regular
graph.\footnote{On the same day that the present paper appeared on arXiv,
Diskin, Lichev, Krivelevich and Markbreit~\cite{DLKM} independently obtained
a more general result for pseudorandom graphs, which, together with
Friedman's theorem, also yields the lower bound in
Theorem~\ref{thm:random-regular-hole}. Their proof follows a substantially
different approach from ours.}

\smallskip
\noindent\textbf{Organization of the paper.}
In Section~\ref{sec:overview}, we give an overview of the proof of
Theorem~\ref{thm:gadget}. In Section~\ref{sec:spread}, we develop the
spread framework for controlling chords. In Section~\ref{sec:compression},
we use hypergraph containers to reduce the remaining possibilities to a
small family of containers. In Section~\ref{sec:gadget-proof}, we complete
the proof of Theorem~\ref{thm:gadget}. Finally,  we prove
Theorem~\ref{thm:random-regular-hole} in
Section~\ref{sec:random-regular-hole}.

\section{Proof overview}
\label{sec:overview}

We briefly describe the proof of Theorem~\ref{thm:gadget}. The argument has
two main ingredients: spread probability weights for controlling possible
chords, and hypergraph containers for compressing the edge sets for which
this control fails.

\medskip
\noindent\textbf{Step 1: Spread weights and chord control.}
We first work in a large complete graph. From any $k$-edge-colouring we
obtain, in some colour, a spread probability weight on short odd cycles.
Spreadness means that the weighted cycles cannot concentrate too heavily
around any small set of vertices. In particular, their possible chords are
well distributed among the edges of the complete graph.

We then keep only the cycles contained in a sparse random edge set.
So enough total weight survives, while the possible chords become sufficiently
dispersed. After renormalising, we obtain a probability weight with very
small chord score. Since the ambient edge set is sparse, a simple averaging
argument forces one of the positive-weight cycles to have no chord, and
hence to be induced.

\medskip
\noindent\textbf{Step 2: Compression by containers.}
The remaining difficulty is to construct one sparse graph that works for
every $k$-edge-colouring. We call an edge set \emph{chord-light} if it
supports a probability weight on short odd cycles with sufficiently small
chord score. Thus, in a sufficiently sparse graph, a chord-light colour
class already yields a monochromatic induced odd cycle.

Indeed, using hypergraph containers, we show that every non-chord-light edge set is
contained in one of only few possible containers, while none of these
containers supports a spread probability weight on short odd cycles. Since
any $k$ edge sets covering almost all edges of the ambient complete graph
must have one member supporting such a spread weight, no $k$ of these
containers can cover almost all edges.

We can therefore choose a sparse graph $F$ which is not contained in the
union of any $k$ containers. In any $k$-edge-colouring of $F$, if every
colour class were non-chord-light, then each would lie in one of the
containers, contradicting the choice of $F$. Hence some colour class is
chord-light and therefore contains a monochromatic induced odd cycle.
Moreover, $F$ has $e^{O(k)}$ edges, completing the proof of
Theorem~\ref{thm:gadget}.
\section{Spread weights, chord control, and induced cycles}
\label{sec:spread}
The purpose of this section is to construct, in a complete graph on
$e^{O(k)}$ vertices, a well-distributed weighted family of short
monochromatic odd cycles. Such a family is robust under deleting a small
set of edges, has controlled overlaps, and has well-distributed possible
chords. These properties will later be used to force an induced
monochromatic cycle. 

For this section, assume that $k\ge3$, let $B=16^k$, and let $M\ge B$.
We keep $M$ general initially and will later specialise to $M=B^{20}$.

In \cref{sec:short-cycles}, we establish the existence of short
monochromatic odd cycles in $K_B$.
In \cref{sec:spread-weights}, we use them to construct spread probability
weights on short monochromatic odd cycles in $K_M$. In
\cref{sec:overlap}, we establish the overlap estimates needed later in the
container argument. In \cref{sec:chords}, we use these estimates to control possible
chords and show that a sufficiently small chord score forces an induced
cycle. Finally, in \cref{sec:conditional-thinning}, we show that a spread
weight can be thinned under decreasing conditioning while retaining control
of the chord score.

\subsection{Short monochromatic cycles in complete graphs}\label{sec:short-cycles}

We show that every complete graph of exponential order contains a short
monochromatic odd cycle under any $k$-edge-colouring.

\begin{lemma}\label{lem:short-cycle}
Every $k$-edge-colouring of $K_{16^k}$ contains a
monochromatic odd cycle $C_h$ for some
$
   5\le h\le 4\ceil{\log_2 k}.
$
\end{lemma}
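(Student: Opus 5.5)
The plan is to argue by contradiction through a colour-by-colour density increment, in the spirit of the classical proof that $K_{2^k+1}$ is not a union of $k$ bipartite graphs. Suppose a $k$-edge-colouring of $K_N$, $N=16^k$, has no monochromatic odd cycle $C_h$ with $5\le h\le L:=4\ceil{\log_2 k}$. Write $G_1,\dots,G_k$ for the colour classes. Triangles are still allowed, so no colour class need be bipartite. The first reduction is to control triangles. Let $S$ be the vertex set of a triangle in $G_c$, and let $x\notin S$ be joined in colour $c$ to two vertices of $S$. Then $x$ together with the triangle yields a $C_4$ and a $C_5$ unless the configuration is very rigid: if $x$ is adjacent to $u,v\in S$ and $S=\{u,v,w\}$, the walk $x\,u\,w\,v\,x$ is a $4$-cycle, and longer extensions give $5$-cycles. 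I would use this to show that within a colour class the triangles, and more generally the short odd closed walks, cluster into vertex-disjoint ``blobs'' of bounded size. Contracting each blob should give an auxiliary graph $G_c'$ whose odd girth exceeds $L$. This is the step I am least sure about and would check first, since a blob could in principle grow by chaining triangles that share edges.

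The main engine is a halving argument on vertex subsets. Start with $V_0=V(K_N)$. At step $j$ pick a colour $c_j$ not yet used and a vertex $v_j$, and grow a breadth-first search in $G_{c_j}[V_j]$ from $v_j$ up to radius $r=\ceil{\log_2 k}$. Two cases arise. If some layer fails to at least double the ball, we can cut there and obtain a set $V_{j+1}$ that is nearly independent in colour $c_j$ and not too small. Here ``nearly independent'' means spanning only the bounded blobs from the first step. If instead every layer doubles, the ball reaches size about $k$ within radius $r$. Because odd girth is large (after contracting blobs), the layers then carry a bipartition, and an odd cycle of length at most $2r+1\le L$ must appear as soon as some layer spans a $c_j$-edge. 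If no layer spans such an edge, the ball is bipartite in $c_j$, and one side of the bipartition serves as $V_{j+1}$, again independent in $c_j$.

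In both cases the set $V_{j+1}$ is $c_j$-independent up to blobs, and its size is at least $|V_j|/16$. Of this factor, a factor $2$ comes from the bipartition, a further constant from discarding blobs, and the remaining slack from the BFS cut. After $k$ steps, every colour has been made independent on $V_k$, while $|V_k|\ge 16^k/16^k=1$. To reach a contradiction I would instead stop at $V_{k-1}$, which has at least $16$ vertices and only one remaining colour. On this set the last colour class is complete, since every pair not coloured in earlier colours must receive it, and it therefore contains a $C_5$ directly.

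The genuine obstacle is the second step. The BFS doubling only reaches size about $k$, not size comparable to $|V_j|$, so the ``cut at a non-expanding layer'' case must be made to lose only a constant factor of $|V_j|$, not a factor depending on $k$. I would try to handle this by applying the BFS argument repeatedly. At each step I would remove a small ball together with its boundary layer, charging the boundary to the ball, and keep one side of the bipartition of each ball. Greedily covering $V_j$ by such balls then gives a $c_j$-independent set of size at least $|V_j|/16$. The base $16$ in $16^k$ is precisely the room this charging requires: a factor $2$ for the bipartition, a factor $2$ for boundary layers, and a factor $4$ for the blob and triangle clean-up.
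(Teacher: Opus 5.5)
Your second step, which you already flag as the obstacle, cannot be repaired in the form you propose. Everything you use there is a property of the single graph $G_{c_j}[V_j]$: BFS balls, a bipartition of a ball, and greedy covering with boundary layers charged to balls. So the argument would prove that every graph with no odd cycle of length in $\{5,\dots,L\}$ has a (nearly) independent set containing a $1/16$ fraction of its vertices. That is false. In $G(n,d/n)$ with $d=1000$ and $n=16^k$, the expected number of cycles of length at most $L=4\lceil\log_2 k\rceil$ is at most $d^L=k^{O(1)}=o(n)$. Deleting one vertex from each such cycle leaves a triangle-free graph of girth larger than $L$, so your blobs are trivial. A first-moment computation shows its independence number is still far below $n/16$. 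In such a graph balls grow by a large factor at every step, so there is no non-expanding layer within radius $r$, and the boundary cannot be charged to the ball at constant cost.

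The missing idea is that the passage from local to global structure must use the $k$ colour classes jointly, through the fact that together they cover $K_N$; a per-colour halving with constant loss does not exist. This is what the paper imports as \cref{lem:local-colouring}. From the purely local hypothesis that every radius-$r$ ball in every colour class is $q$-colourable, it gives $N\le q^k k^{k/r}$. The factor $k^{1/r}$ per colour is exactly the price of ball expansion that your constant-factor accounting leaves out, and it is at most $2$ because $r=\lceil\log_2 k\rceil$.

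Your first step is also wrong as stated, and this is the part the paper's \cref{lem:six-colours} handles correctly. Triangles need not form bounded blobs. The book $K_2+\overline{K_t}$ and the windmill (any number of triangles sharing one vertex) contain no odd cycle other than triangles, yet have unboundedly many triangles in one cluster.

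For the same reason your dichotomy fails: ``a layer spans an edge, so there is a short odd cycle; otherwise the ball is bipartite''. An edge between two children of the same parent closes only a triangle. Your endgame breaks as well, since on $V_{k-1}$ the earlier colours are only independent up to blobs, so the last colour class need not be complete.

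The correct local statement has three parts. Every edge inside a BFS level joins two siblings. Each sibling class is $P_4$-free, hence a disjoint union of stars and triangles. So each level is $3$-colourable and each ball is $6$-colourable. Feeding $(q,r)=(6,\lceil\log_2 k\rceil)$ into \cref{lem:local-colouring} gives $16^k\le 6^k\cdot 2^k=12^k$, which is the contradiction.
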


The proof combines a simple breadth-first-search (BFS) observation with a colouring result of
Axenovich, Cames van Batenburg, Janzer, Michel and Rundstr\"om~\cite{ACJMR}.
If a monochromatic graph contains no short odd cycle, then each BFS level
inside a suitable ball is 3-colourable, and hence the whole ball is
6-colourable.  Their result then bounds the order of the ambient complete
graph, contradicting the choice of $K_{16^k}$.

The ball of radius $r$ around a vertex $v$ in $G$ is the subgraph induced by
all vertices at distance at most $r$ from $v$.  

\begin{lemma}[\cite{ACJMR}, Lemma~2.1]\label{lem:local-colouring}
 Consider a $k$-edge-colouring of $K_N$.  Suppose that, in each
monochromatic subgraph, the ball of radius $r$ around every vertex has
chromatic number at most $q$.  Then
$
   N\le q^k k^{k/r}.
$
\end{lemma}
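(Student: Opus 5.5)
We argue by induction on $k$, proving $N\le f(k):=q^k k^{k/r}$. The base case $k=1$ is immediate: $K_N$ is itself a ball of radius $1\le r$ in its single colour class, so $N=\chi(K_N)\le q$. Two hereditary facts drive the induction. First, the hypothesis passes to induced subgraphs: a ball in $G_i[U]$ is contained in the corresponding ball of $G_i$, so it still has chromatic number at most $q$. Second, any set $I$ that is independent in the colour-$1$ graph $G_1$ spans a clique coloured with the remaining $k-1$ colours. The induction hypothesis then gives $|I|\le f(k-1)$. Consequently any $U\subseteq V(K_N)$ with $\chi(G_1[U])\le t$ has $|U|\le t\,f(k-1)$.

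The plan is a low-diameter (ball-carving) decomposition of $G_1$. We maintain a current vertex set $U$, starting with $U=V(K_N)$, and pick any $v\in U$. Let $B_j$ denote the ball of radius $j$ around $v$ in $G_1[U]$. The ball $B_r$ is $q$-colourable, so $|B_r|\le q f(k-1)$. Since $|B_0|=1$, the $r$ ratios $|B_{j+1}|/|B_j|$ for $0\le j<r$ have product at most $|B_r|$. Rather than the naive threshold $|B_r|^{1/r}$, I would choose $j<r$ with $|B_{j+1}|\le \lambda|B_j|$ for a carefully chosen $\lambda$; the target is $\lambda=k^{1/r}$, or something like $(1+1/k)$-type growth measured against a global budget. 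We then put $B_j$ into a kept part $K$, put the shell $B_{j+1}\setminus B_j$ into a discarded part $D$, and recurse on $U\setminus B_{j+1}$. Since no $G_1$-edge joins $B_j$ to $U\setminus B_{j+1}$, the carved balls can all reuse the same $q$ colours. Hence $\chi(G_1[K])\le q$, and so $|K|\le q f(k-1)$. The discarded set satisfies $|D|\le(\lambda-1)|K|$, or at least $|D|$ is bounded by some fixed multiple of $|K|$.

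To close the argument, $D$ must be handled. I would iterate the whole procedure on $D$ with colour $1$: $D$ again satisfies the hypothesis by heredity, producing $K',D'$, and so on. Alternatively, I would restructure the choice so that $D$ is small enough to absorb directly. Summing the kept parts then gives $N\le q f(k-1)\cdot g(\lambda,r)$, where $g(\lambda,r)$ counts the rounds and the geometric loss. The aim is to get $g\le k^{1/r}$, which yields $N\le q\,k^{1/r} f(k-1)=f(k)$ as required.

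The main obstacle is precisely this bookkeeping: getting the factor $k^{1/r}$ per colour, rather than $|B_r|^{1/r}$ or a $\log$-loss from iterating on $D$. My first attempt would be to choose the shell not by ball sizes but by a potential weighted across all $k$ colours simultaneously. One version: process all colours at once and pick, for each carved ball, the radius $j$ at which the shell has size at most $k^{1/r}-1$ times the ball, using a weight that multiplies by $k^{1/r}$ per layer. The product over $r$ layers is then $k$, and this factor $k$ can be charged against the $k$ colours, so that the pigeonhole over colours supplies the missing slack. If that fails, I would fall back to proving a weaker bound of the same shape, with a constant in the exponent, which still suffices for \cref{lem:short-cycle} up to adjusting $16^k$.
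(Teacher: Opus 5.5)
The paper does not prove this lemma; it imports it from \cite{ACJMR}. Your sketch therefore has to stand on its own, and it has a genuine gap exactly where you flag the difficulty.

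First, no iteration on $D$ is needed. A single carving pass already gives $N=|K|+|D|\le\lambda|K|\le\lambda q f(k-1)$, so the only question is the size of $\lambda$. With the counting measure and $|B_0|=1$, pigeonholing over the $r$ shells guarantees only a radius with growth at most $|B_r|^{1/r}\le(qf(k-1))^{1/r}$. A radius with growth at most $k^{1/r}$ need not exist. For example, take a valid $(k-1)$-colouring on many vertices (such as the product colouring of $K_{q^{k-1}}$) and recolour into colour $1$ a $4$-regular subgraph of girth greater than $2r+1$. Deleting edges from the other classes only shrinks their balls, and the $r$-balls of colour $1$ are trees, so the hypothesis still holds. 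Yet around every seed each shell grows by a factor greater than $3$, while $k^{1/r}\le2$ for $r=\ceil{\log_2 k}$.

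With the $\lambda$ you can actually guarantee, the recursion is $f(k)\le(qf(k-1))^{1+1/r}$. This gives only $\log f(k)\le(r+1)\bigl((1+1/r)^k-1\bigr)\log q$, which for $r=\ceil{\log_2k}$ is $e^{\Theta(k/\log k)}$ rather than $O(k)$. So even your ``weaker bound of the same shape'' does not follow, and Lemma~\ref{lem:short-cycle} could not be derived from it.

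Second, the remedy you sketch does not close the gap. The only natural source of a ratio bounded by $k$ is the pigeonhole at a single centre. With balls taken in $G_i[U]$, we have $U=\bigcup_i B_i(v,1)$, so some colour has $|B_i(v,1)|\ge|U|/k$ and hence growth at most $k$ from radius $1$ to radius $r$. However, the maximising colour changes from seed to seed, so the carved pieces lie in different colours. The kept set is then $q$-colourable only colour by colour, in the sense that $G_c[K_c]$ is $q$-colourable. Bounding it by $\sum_c|K_c|\le kqf(k-1)$ reintroduces the factor $k$, which is worse than the trivial radius-$1$ bound $N\le 1+k(q-1)f(k-1)$. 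If you instead fix the colour, seeds in the remainder need not have large balls of that colour.

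Your proposal never explains how the pigeonhole over colours is paid once per colour rather than once per carved ball. ``This factor $k$ can be charged against the $k$ colours'' is an assertion, not an argument, and that mechanism is precisely the missing idea. As written, the proposal is a plan rather than a proof.
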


To apply \cref{lem:local-colouring}, we verify that every monochromatic
subgraph satisfies the required local colouring condition.

\begin{lemma}\label{lem:six-colours}
Let $r\ge2$.  If a graph contains no odd cycle of length in
$\{5,7,\ldots,2r+1\}$, then the ball of radius $r$ around every vertex is
6-colourable.
\end{lemma}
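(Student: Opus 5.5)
Fix a vertex $v$ and let $L_i$ denote the set of vertices at distance exactly $i$ from $v$, for $0\le i\le r$. The ball of radius $r$ is the induced subgraph on $L_0\cup\cdots\cup L_r$. Every edge of this ball joins two vertices in the same level or in consecutive levels. The plan is to show that each $G[L_i]$ is $3$-colourable, and then to combine colourings. We use palette $\{1,2,3\}$ on even levels and palette $\{4,5,6\}$ on odd levels. Edges inside a level are proper because that level is properly coloured. Edges between consecutive levels join an even level to an odd level, so they use disjoint palettes. This gives a proper $6$-colouring of the ball.

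\textbf{Key step: each level $L_i$ with $i\le r$ is $3$-colourable.} For $i=0$ this is trivial. We first show that $G[L_i]$ contains no path on three vertices $xyz$ with $xz\notin E$ that closes into a short odd cycle. The more robust approach is to show that $G[L_i]$ has no odd cycle of length at least $5$ with length at most $2r+1$ built from BFS paths. We argue as follows. Take an edge $xy$ inside $L_i$, and consider the BFS paths from $x$ and from $y$ to $v$. Let $w\in L_j$ be the last common vertex of these paths. Then the two branches together with $xy$ form a cycle of length $2(i-j)+1$, which is odd and at most $2r+1$. By hypothesis this length must equal $3$. Hence $i-j=1$, so $x$ and $y$ have a common parent.

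For an odd cycle $x_1x_2\cdots x_{2\ell+1}$ inside $L_i$, we would like to show that it must be a triangle. The cleanest route is this. Choose a parent $p(x)\in L_{i-1}$ for every $x\in L_i$, namely a fixed BFS-tree parent. Suppose $xy$ is an edge in $L_i$. Applying the above argument to the tree paths shows that the tree paths of $x$ and $y$ merge after exactly one step, so $p(x)=p(y)$. Hence every component of $G[L_i]$ lies inside the child set of a single vertex $u\in L_{i-1}$.

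It then suffices to colour $G[N(u)\cap L_i]$ for a single $u$. Suppose $P$ is an induced path $x-y-z$ in $N(u)$. Then $u,x,y,z$ already forms only triangles, which is not useful on its own. Instead we use a different argument for the neighbourhood of $u$. If $G[N(u)]$ contains an odd cycle $C$ of length $2\ell+1\ge5$, then taking a shortest odd cycle inside $N(u)$ gives an induced odd cycle of length at least $5$. Going through $u$, we can shortcut it: take $x_1,x_3$ on $C$ at distance $2$, and form the cycle $u x_1 x_2 x_3 u$. This cycle is even, so it does not directly help.

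\textbf{Main obstacle.} The main obstacle is to bound the chromatic number of each child-class, or of each level, by $3$ using only the absence of odd cycles of length $5,\ldots,2r+1$ together with the tree structure. Triangles are allowed, so each level can have large cliques in principle. I would therefore revisit the merging argument. Instead of trying to make the level bipartite, I would try to show that every component of $G[L_i]$ lies in the neighbourhood of one vertex, and that this neighbourhood is triangle-structured in a way that makes it $3$-colourable. The first thing I would try is to use the forbidden $C_5$: an induced path $x-y-z$ in $N(u)$ together with a second parent-level route closing through $L_{i-1}$ and $L_{i-2}$ would create a $5$-cycle. If this fails, I would fall back on a weaker constant and check whether \cref{lem:local-colouring} still yields a contradiction for $K_{16^k}$.
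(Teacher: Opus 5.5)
Your outer framework is correct and matches the paper's. The two disjoint palettes on even and odd levels work. So does the argument that the endpoints of any edge inside $L_i$ have the same BFS parent: otherwise the two tree branches and the edge form an odd cycle of length $2(i-j)+1\in\{5,\dots,2r+1\}$. Hence $G[L_i]$ splits into the child sets of the vertices of $L_{i-1}$, with no edges between different child sets. The gap is exactly the step you flag as the main obstacle. You never prove that the child set of a single $u\in L_{i-1}$ induces a $3$-colourable graph, so the proposal does not establish the lemma.

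The missing idea is one step beyond what you tried. A path $x$--$y$--$z$ in the child set, together with $u$, only yields triangles and a $4$-cycle. Instead, take a path on four vertices $x_1x_2x_3x_4$ (not necessarily induced) among the children of $u$. Since $u$ is adjacent to all of them, $ux_1x_2x_3x_4u$ is a $5$-cycle, which is forbidden because $r\ge2$. So the graph induced on the children of $u$ contains no path on four vertices. A connected graph with no such path is a star or a triangle, so every child set, and therefore every level, is $3$-colourable.

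This also shows your remark that a level ``can have large cliques in principle'' is false under the hypothesis. Any $K_4$ inside $L_i$ lies in a single child set and contains a path on four vertices, hence closes a $5$-cycle through the parent. Your odd-cycle attempt stalled only because you closed through $u$ after three vertices of $C$ instead of four. The fallback of weakening the constant is unnecessary. As written it would not rescue the argument anyway, since you have no bound at all on the chromatic number of the child sets.
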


\begin{proof}
Fix a vertex $v$ and consider the ball of radius $r$ around $v$.  Choose a
BFS tree $T$ rooted at $v$.  For $0\le i\le r$, let $L_i$
be the $i$-th level, consisting of the vertices at distance exactly $i$ from
$v$.  We first show that $G[L_i]$ is 3-colourable for every $i$.

We claim that the endpoints of every edge inside $L_i$ have the same parent
in $T$.  Indeed, let $xy\in E(G[L_i])$, and let the last common
vertex of the tree paths from $v$ to $x$ and $y$ lie in $L_j$.  If
$j\le i-2$, these two tree paths together with $xy$ form an odd cycle of
length
$
   2(i-j)+1\in\{5,7,\ldots,2r+1\},
$
a contradiction.  Hence $j=i-1$, so $x$ and $y$ have the same parent.

Fix $u\in L_{i-1}$ and consider the subgraph induced by its children in
$L_i$.  This subgraph contains no path on four vertices, since such a path together
with $u$ would form a $5$-cycle. Thus, $G[L_i\cap N_T(u)]$ is a disjoint union of stars and triangles, and
is therefore $3$-colourable.

Since every edge inside $L_i$ joins two children of the same parent,  there are
no edges between the child sets of distinct vertices in $L_{i-1}$.  It
follows that the whole level $L_i$ is 3-colourable.  Finally, use one set of
three colours on the even levels and a disjoint set of three colours on the
odd levels.  Since every edge of the ball has endpoints either in the same
level or in consecutive levels, this is a proper 6-colouring of the ball.
\end{proof}

\begin{proof}[Proof of \cref{lem:short-cycle}]
Note that
$2\ceil{\log_2 k}+1\le4\ceil{\log_2 k}$.
Suppose that there is no monochromatic odd cycle of length between
$5$ and $2\ceil{\log_2 k}+1$ in $K_{16^k}$. Applying \cref{lem:six-colours} on $r=\ceil{\log_2 k}$,
every ball of radius $\ceil{\log_2 k}$ in each monochromatic subgraph is
$6$-colourable. Applying \cref{lem:local-colouring} with
$(N,q,r)=(16^k,6,\ceil{\log_2 k})$, we obtain
$
16^k\le 6^k k^{k/\ceil{\log_2 k}}
\le 12^k,
$
where the last inequality follows from
$k^{1/\ceil{\log_2 k}}\le2$, a contradiction.
\end{proof}

\subsection{Spread weights on short monochromatic cycles}\label{sec:spread-weights}
We assign a spread probability weight to the short monochromatic odd cycles found above. 

For an odd $h$ with $5\le h\le4\ceil{\log_2 k}$, let
$\mathcal C_h$ denote the family of all $h$-cycles in $K_M$. Throughout the paper, all weights are assumed to be nonnegative.

\begin{defn}\label{def:spread}
A \emph{spread weight} on $\mathcal C_h$ is a  weight $\mu$
satisfying
$
\sum_{Q\in\mathcal C_h}\mu(Q)\le1
$
and, for every $S\subseteq V(K_M)$ with $2\le|S|\le h$, \vspace{-0.3cm}
\begin{equation}\label{eq:spread}
\sum_{Q\in\mathcal C_h:\,S\subseteq V(Q)}\mu(Q)
\le
8k\ceil{\log_2 k}\left(\frac BM\right)^{|S|}.
\end{equation}
A weight $\mu$ on $\mathcal C_h$, not necessarily spread, is a \emph{probability weight} if 
$\sum_{Q\in\mathcal C_h}\mu(Q)=1$.
\end{defn}
Thus, a spread weight does not concentrate on any fixed small vertex set. The use of spread measures goes back to Talagrand~\cite{Talagrand}; see also
D\v{z}avoronok, Gabsdil, Mylet, Popa and Shao~\cite{DGMPSh}, where
spreadness is used to control how often individual vertices are visited by
random walks, and hence to control possible chords. To our knowledge,
ours is the first argument to place a spread probability weight directly on
the cycles and use it to enforce inducedness.

We say that an edge set $J\subseteq E(K_M)$ \emph{supports} $\mu$ if
$E(Q)\subseteq J$ for every $Q\in\mathcal C_h$ with $\mu(Q)>0$. A graph
$G\subseteq K_M$ supports $\mu$ if $E(G)$ supports $\mu$. For a $k$-edge-colouring of $K_M$ and a colour $c$, let $G_c$ denote the
spanning subgraph of $K_M$ whose edges are precisely the edges of colour $c$.

We first show that the short monochromatic cycles admit a spread probability weight after fixing a suitable colour and cycle length.

\begin{lemma}\label{lem:spread-exists}
In every $k$-edge-colouring of $K_M$, there exist a colour $c$ and an odd $h$ with $5\le h\le4\ceil{\log_2 k}$ such that  $G_c$ supports a spread probability weight on $\mathcal C_h$.
\end{lemma}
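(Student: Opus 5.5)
Take a uniformly random injection $\phi\colon V(K_B)\to V(K_M)$ (equivalently, a uniformly random $B$-subset with a random labelling) and pull back the colouring. By \cref{lem:short-cycle}, the induced copy of $K_B$ always contains a monochromatic odd cycle $C_h$ with $5\le h\le4\ceil{\log_2 k}$. For each outcome of $\phi$, fix one such cycle $Q_\phi$ by a deterministic rule, together with its colour $c_\phi$ and length $h_\phi$. The pair $(c_\phi,h_\phi)$ takes at most $k\cdot 2\ceil{\log_2 k}$ values, since there are at most $2\ceil{\log_2 k}$ admissible odd lengths. By pigeonhole, some pair $(c,h)$ occurs with probability at least $p_0:=1/(2k\ceil{\log_2 k})$.

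Define $\mu(Q)=\PP(Q_\phi=Q,\ c_\phi=c,\ h_\phi=h)/\PP(c_\phi=c,\ h_\phi=h)$. This is a probability weight on $\mathcal C_h$. Every $Q$ with $\mu(Q)>0$ is a cycle of colour $c$, so $G_c$ supports $\mu$.

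For spreadness, fix $S\subseteq V(K_M)$ with $2\le|S|\le h$. If $S\subseteq V(Q_\phi)$, then $S$ lies in the image of $\phi$. The probability that a uniform $B$-subset of $[M]$ contains a fixed $s$-set is
\[
\frac{\binom{M-s}{B-s}}{\binom MB}=\prod_{i=0}^{s-1}\frac{B-i}{M-i}\le\left(\frac BM\right)^{s},
\]
using $B\le M$. Hence
\[
\sum_{Q\ni S}\mu(Q)\le \frac{(B/M)^{|S|}}{p_0}=2k\ceil{\log_2 k}\,(B/M)^{|S|},
\]
which is within the constant $8k\ceil{\log_2 k}$ required in \eqref{eq:spread}. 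The total mass is exactly $1$, so $\mu$ is a spread probability weight.

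The only point needing care is measurability of the choice: the rule selecting $Q_\phi$ must be a fixed function of $\phi$ (for instance, the lexicographically first monochromatic short odd cycle in the image). This works because the spread bound only uses the containment event $S\subseteq\phi(V(K_B))$, which does not depend on how the cycle is chosen. There is no real obstacle. The slack in the constant ($8$ versus $2$) absorbs any cruder count of the number of lengths.
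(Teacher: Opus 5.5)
Your proposal is correct and follows the paper's proof essentially verbatim: a uniformly random $B$-subset (your random injection is equivalent), \cref{lem:short-cycle}, pigeonhole over the at most $2k\ceil{\log_2 k}$ colour--length pairs, and $\mu$ defined as the conditional distribution of the chosen cycle. Spreadness then comes from $\PP(S\subseteq U)\le (B/M)^{|S|}$, giving the constant $2$ in place of $8$; your explicit product computation of this containment probability is a detail the paper leaves implicit.
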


\begin{proof}
Choose a $B$-vertex set $U\subseteq V(K_M)$ uniformly at random. By \cref{lem:short-cycle}, $K_M[U]$ contains a monochromatic odd cycle of length between $5$ and $4\lceil\log_2 k\rceil$; for each $U$, fix one such cycle $Q$.

There are at most $2k\lceil\log_2 k\rceil$ possible pairs consisting of the colour and length of $Q$. Hence, for some colour $c$ and some odd integer $h$ with $5\le h\le4\lceil\log_2 k\rceil$, the event 
$
\mathcal E=\{Q\text{ has colour }c\text{ and length }h\}
$
satisfies $\PP(\mathcal E)\ge1/(2k\lceil\log_2 k\rceil)$. Define $\mu(R)=\PP(Q=R\mid\mathcal E)$ for every $R\in\mathcal C_h$. Then $\mu$ is a probability weight supported by $G_c$.

We verify that $\mu$ is spread. Fix $S\subseteq V(K_M)$ with $2\le|S|\le h$. Since $V(Q)\subseteq U$,
\[
\begin{aligned}
\sum_{R\in\mathcal C_h:\,S\subseteq V(R)}\mu(R)
&=\PP(S\subseteq V(Q)\mid\mathcal E)
\le\frac{\PP(S\subseteq U)}{\PP(\mathcal E)}
\le
2k\lceil\log_2 k\rceil
\left(\frac BM\right)^{|S|}.
\end{aligned}
\]
Thus $\mu$ is spread, with the stronger constant $2$ in place of $8$.
\end{proof}

We next formulate a robust covering statement corresponding to the $k$ colour classes in a $k$-edge-colouring. Namely, we consider $k$ arbitrary edge sets and show that if none supports a spread probability weight, then their union leaves many edges of $K_M$ uncovered.
\begin{lemma}\label{lem:many-uncovered}
Let $X_1,\ldots,X_k\subseteq E(K_M)$. If none of $X_1,\ldots,X_k$
supports a spread probability weight on $\mathcal C_h$ for any odd $h$ with
$5\le h\le4\ceil{\log_2 k}$, then \vspace{-0.1cm}
\[
\left|E(K_M)\setminus\bigcup_{i=1}^k X_i\right|
\ge B^{-3}\binom M2.
\]
\end{lemma}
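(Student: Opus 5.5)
The plan is to argue by contraposition, rerunning the random-sampling proof of \cref{lem:spread-exists} with one extra conditioning event. Suppose that
\[
W:=E(K_M)\setminus\bigcup_{i=1}^k X_i
\]
satisfies $|W|<B^{-3}\binom M2$. The aim is to show that some $X_c$ supports a spread probability weight on some $\mathcal C_h$.

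\textbf{Step 1: the colouring.} Define a $k$-edge-colouring of $K_M$ as follows. Each edge $e\notin W$ receives some colour $i$ with $e\in X_i$, chosen arbitrarily among the valid ones. Each edge of $W$ receives colour $1$. The only defect is that a colour-$c$ edge lying in $W$ need not belong to $X_c$, so the aim is to avoid $W$ altogether.

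\textbf{Step 2: sampling and avoiding $W$.} As in the proof of \cref{lem:spread-exists}, choose a $B$-subset $U\subseteq V(K_M)$ uniformly at random. Let $\mathcal G$ be the event that $U$ spans no edge of $W$. Each fixed edge lies inside $U$ with probability $\binom B2/\binom M2$. By the union bound,
\[
\PP(\mathcal G^c)\le |W|\binom B2\Big/\binom M2<B^{-3}\binom B2<\frac1{2B}.
\]
By \cref{lem:short-cycle}, for every $U$ we may fix a monochromatic odd cycle $Q\subseteq K_M[U]$ of length $h$ with $5\le h\le4\ceil{\log_2 k}$.

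\textbf{Step 3: pigeonhole.} There are at most $2k\ceil{\log_2 k}$ possible pairs (colour, length). So for some colour $c$ and odd $h$, the event $\mathcal E$ that $\mathcal G$ holds and $Q$ has colour $c$ and length $h$ satisfies
\[
\PP(\mathcal E)\ge\frac{1-1/(2B)}{2k\ceil{\log_2 k}}\ge\frac1{4k\ceil{\log_2 k}}.
\]
Set $\mu(R)=\PP(Q=R\mid\mathcal E)$ for $R\in\mathcal C_h$; this is a probability weight.

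\textbf{Step 4: support and spreadness.} On $\mathcal E$, every edge of $Q$ lies inside $U$, hence not in $W$. Since it has colour $c$, it therefore lies in $X_c$, so $X_c$ supports $\mu$. For spreadness, take $S$ with $2\le|S|\le h$. Then
\[
\sum_{R\in\mathcal C_h:\,S\subseteq V(R)}\mu(R)\le\frac{\PP(S\subseteq U)}{\PP(\mathcal E)}\le 4k\ceil{\log_2 k}\left(\frac BM\right)^{|S|},
\]
which is within the constant $8$ allowed in \eqref{eq:spread}. This contradicts the hypothesis on $X_1,\ldots,X_k$, so in fact $|W|\ge B^{-3}\binom M2$.

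The only point needing care is Step 2: the edges of $W$ must be excluded from the sampled set before invoking \cref{lem:short-cycle}, so that the colour assignment on $W$ is irrelevant. The loss this costs in the pigeonhole bound is only a factor of $2$, which the slack between the constants $2$ and $8$ absorbs.
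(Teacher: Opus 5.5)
Your argument is correct and is essentially the paper's proof. The paper applies \cref{lem:spread-exists} to the same colouring as a black box. It then deletes the cycles meeting the uncovered set, using the per-edge bound $2k\ceil{\log_2 k}(B/M)^2$ to show that less than $1/10$ of the weight is lost, and renormalises. You instead build the avoidance of $W$ into the sampling event $\mathcal E$. Both versions rest on the same union bound over the edges of $W$, so you simply trade the renormalisation step for a factor $2$ in the pigeonhole bound.
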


\begin{proof}
Let
$
Z=E(K_M)\setminus\bigcup_{i=1}^k X_i,
$
and suppose that $|Z|<B^{-3}\binom M2$. For each edge in
$\bigcup_{i=1}^k X_i$, choose arbitrarily an index $i$ such that the edge
belongs to $X_i$ and give it colour $i$. Colour the edges of $Z$
arbitrarily.

Then \cref{lem:spread-exists}  gives a colour $c$, an odd $h$ with
$5\le h\le4\ceil{\log_2 k}$, and a probability weight $\mu$ supported by
$G_c$ such that, for every $S\subseteq V(K_M)$ with
$2\le|S|\le h$, \vspace{-0.1cm}
\begin{equation}\label{eq:strong-spread}
\sum_{Q\in\mathcal C_h:\,S\subseteq V(Q)}\mu(Q)
\le
2k\ceil{\log_2 k}\left(\frac BM\right)^{|S|}.
\end{equation}
In particular, for every $uv\in E(K_M)$,\vspace{-0.1cm}
\[
\sum_{Q\in\mathcal C_h:\,uv\in E(Q)}\mu(Q)
\le
2k\ceil{\log_2 k}\left(\frac BM\right)^2.
\]

Delete all positive-weights of cycles containing an edge of $Z$. By the above
inequality and the bound on $|Z|$, a direct calculation shows that the
deleted weight is less than $1/10$.

Let $\widetilde\mu$ be the renormalisation of the remaining weight, i.e.,
obtained by dividing each remaining weight by the total, so
that $\sum_{Q\in\mathcal C_h}\widetilde\mu(Q)=1$. Then every positive-weight cycle under $\widetilde\mu$ has its edge set contained in $X_c$. Moreover,
the renormalising factor is at most $10/9$, so \eqref{eq:strong-spread} gives
\[
\sum_{Q\in\mathcal C_h:\,S\subseteq V(Q)}\widetilde\mu(Q)
\le
\frac{20}{9}k\ceil{\log_2 k}
\left(\frac BM\right)^{|S|}
\le
8k\ceil{\log_2 k}
\left(\frac BM\right)^{|S|}.
\]
Thus $X_c$ supports the spread probability weight $\widetilde\mu$,
a contradiction.
\end{proof}

\subsection{Overlap estimates for spread weights}\label{sec:overlap}
We next derive several consequences of the spread condition that control
overlaps between weighted cycles. 
Recall that $h$ is odd with $5\le h\le4\ceil{\log_2 k}$, and that
$\mathcal C_h$ denotes the family of all $h$-cycles in $K_M$.

\begin{lemma}\label{lem:overlap}
Let $\mu$ be a spread weight on $\mathcal C_h$.
\begin{enumerate}[label=\textup{(\roman*)},leftmargin=2.6em]
\item If $A\subseteq E(K_M)$ and $1\le|A|<h$, then
\[
\sum_{Q\in\mathcal C_h:\,A\subseteq E(Q)}\mu(Q)
\le
8k\lceil\log_2 k\rceil
\left(\frac BM\right)^{|A|+1}.
\]
If $|A|=h$, the corresponding sum is at most
$8k\lceil\log_2 k\rceil(B/M)^h$.

\item For $2\le j<h$,
\[
\sum_{\substack{A\subseteq E(K_M)\\|A|=j}}
\left(\sum_{Q\in\mathcal C_h:\,A\subseteq E(Q)}\mu(Q)\right)^2
\le
8k\lceil\log_2 k\rceil\binom hj
\left(\frac BM\right)^{j+1}.
\]
For $j=h$, the corresponding sum is at most
$8k\lceil\log_2 k\rceil(B/M)^h$.

\item
\[
\sum_{\substack{S\subseteq V(K_M)\\|S|=3}}
\left(\sum_{Q\in\mathcal C_h:\,S\subseteq V(Q)}\mu(Q)\right)^2
\le
8k\lceil\log_2 k\rceil\binom h3
\left(\frac BM\right)^3.
\]
\end{enumerate}
\end{lemma}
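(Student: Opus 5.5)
All three parts follow from the spread condition \eqref{eq:spread} by reducing edge events to vertex events, together with the trivial bound $\sum_Q\mu(Q)\le1$. Write $K=8k\lceil\log_2 k\rceil$ throughout.

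\textbf{Part (i).} Let $A\subseteq E(K_M)$ with $1\le|A|<h$, and let $S=V(A)$ be the set of endpoints of edges of $A$. If $A\subseteq E(Q)$ then $S\subseteq V(Q)$, so the sum in (i) is at most $\sum_{Q:\,S\subseteq V(Q)}\mu(Q)\le K(B/M)^{|S|}$, provided $2\le|S|\le h$. The sum vanishes unless $A$ is contained in the edge set of some $h$-cycle. In that case $A$ is a proper subgraph of a cycle, hence a linear forest with at least one component path. A linear forest with $|A|$ edges has $|A|+(\text{number of components})\ge |A|+1$ vertices. Also $|S|\le h$, since $S\subseteq V(Q)$ with $|V(Q)|=h$. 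Since $B\le M$, the ratio $B/M$ is at most $1$, so $(B/M)^{|S|}\le(B/M)^{|A|+1}$, which gives (i). When $|A|=h$, $A$ is the whole cycle and $|S|=h$, giving the bound $K(B/M)^h$.

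\textbf{Parts (ii) and (iii).} For both squared sums I will use $x^2\le x\cdot\max$. Define $f(A)=\sum_{Q\supseteq A}\mu(Q)$. For $2\le j<h$, part (i) gives $\max_{|A|=j}f(A)\le K(B/M)^{j+1}$. Exchanging the order of summation,
\[
\sum_{|A|=j}f(A)=\sum_Q\mu(Q)\binom hj\le\binom hj,
\]
because each $h$-cycle has exactly $\binom hj$ edge subsets of size $j$. Multiplying the two bounds yields (ii). For $j=h$, each cycle contains exactly one $h$-subset of its edges, so the sum is $\sum_Q\mu(Q)\,f(E(Q))\le K(B/M)^h$.

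Part (iii) is identical, working with vertices instead of edges. Let $g(S)=\sum_{Q:\,S\subseteq V(Q)}\mu(Q)$ for $|S|=3$. The spread condition applies directly, since $3\le h$, and gives $g(S)\le K(B/M)^3$. Exchanging summation gives $\sum_{|S|=3}g(S)\le\binom h3$, and multiplying the two bounds gives (iii).

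The main point needing care is the vertex count in (i): a proper subset of edges of a cycle always spans at least one more vertex than it has edges. This holds because such a subset is acyclic and nonempty, so it is a forest with at least one component. Everything else is routine double counting.
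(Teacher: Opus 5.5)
Your proof is correct and follows the paper's argument essentially step for step. In (i) you pass from $A$ to its vertex set $S$ with $|S|\ge|A|+1$; your linear-forest justification and explicit use of $B\le M$ spell out what the paper leaves implicit. In (ii) and (iii) you use the same bound $\sum x^2\le(\max x)\sum x$ together with the double count $\sum_{|A|=j}f(A)=\binom hj\sum_Q\mu(Q)$, exactly as the paper does.
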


\begin{proof}
\textup{(i)} Let $\emptyset\ne A\subseteq E(K_M)$, and suppose that $A\subseteq E(Q_0)$ for some $h$-cycle $Q_0$, as otherwise the claim is trivial.

First assume that $|A|<h$, and let $S$ be the set of vertices incident with the edges of $A$. Since $A$ is a proper subset of $E(Q_0)$, we have $|S|\ge|A|+1$. Thus, by \eqref{eq:spread},
\[
\sum_{Q\in\mathcal C_h:\,A\subseteq E(Q)}\mu(Q)
\le
\sum_{Q\in\mathcal C_h:\,S\subseteq V(Q)}\mu(Q)
\le
8k\lceil\log_2 k\rceil
\left(\frac BM\right)^{|S|}
\le
8k\lceil\log_2 k\rceil
\left(\frac BM\right)^{|A|+1}.
\]

If $|A|=h$, then $A=E(Q_0)$, and hence \eqref{eq:spread},
\[
\sum_{Q\in\mathcal C_h:\,A\subseteq E(Q)}\mu(Q)
=\mu(Q_0)
\le
8k\lceil\log_2 k\rceil
\left(\frac BM\right)^h.
\]

\noindent{\textup{(ii)}} Fix $2\le j\le h$, and for each $A\subseteq E(K_M)$ with $|A|=j$, write
$
x_A=\sum_{Q\in\mathcal C_h:\,A\subseteq E(Q)}\mu(Q).
$
If $j<h$, then by part~\textup{(i)},
\[
\sum_{|A|=j}x_A^2
\le
\max_{|A|=j}x_A\cdot\sum_{|A|=j}x_A
\le
8k\lceil\log_2 k\rceil
\left(\frac BM\right)^{j+1}
\sum_{Q\in\mathcal C_h}\mu(Q)\binom hj
\le
8k\lceil\log_2 k\rceil\binom hj
\left(\frac BM\right)^{j+1}.
\]

If $j=h$, the same argument using part~\textup{(i)} gives the desired bound.

\smallskip 

\noindent{\textup{(iii)}} For each $S\subseteq V(K_M)$ with $|S|=3$, write
$
x_S=\sum_{Q\in\mathcal C_h:\,S\subseteq V(Q)}\mu(Q).
$
Then, by \eqref{eq:spread},
\[
\sum_{|S|=3}x_S^2
\le
\left(\max_{|S|=3}x_S\right)\sum_{|S|=3}x_S
\le
8k\lceil\log_2 k\rceil
\left(\frac BM\right)^3
\sum_{Q\in\mathcal C_h}\mu(Q)\binom h3
\le
8k\lceil\log_2 k\rceil\binom h3
\left(\frac BM\right)^3.
\]
\end{proof}

In the container argument later, the relevant family of edge sets is controlled
not by its size but by its total $p$-weight, $\sum_{A}p^{|A|}$. The next
lemma shows that such a bound implies that the family can capture only a
small amount of a spread weight.
\begin{lemma}\label{lem:p-weight-deletion}
Suppose that $M\ge B^{20}$ and
$B^{-9}\le p\le1$. Let $\mu$ be a spread weight on $\mathcal C_h$.
Suppose that $\mathcal A$ is a family of edge sets such that
$2\le|A|\le h$ for every $A\in\mathcal A$ and
\[
\sum_{A\in\mathcal A}p^{|A|}
\le
p\binom M2.
\]
Then the total $\mu$-weight of cycles whose edge set contains some member
of $\mathcal A$ is at most $1/100$.
\end{lemma}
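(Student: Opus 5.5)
The plan is to bound the captured weight by a union bound over members of $\mathcal A$ and then control each member with \cref{lem:overlap}\textup{(i)}, grouping members by size. For $A\in\mathcal A$ write $x_A=\sum_{Q\in\mathcal C_h:\,A\subseteq E(Q)}\mu(Q)$. The total $\mu$-weight of cycles whose edge set contains some member of $\mathcal A$ is at most $\sum_{A\in\mathcal A}x_A$. Let $\mathcal A_j$ be the members of size $j$, for $2\le j\le h$, and let $a_j=|\mathcal A_j|$. The hypothesis gives $a_jp^j\le p\binom M2$, hence
\[
a_j\le \tfrac12 M^2p^{1-j}.
\]
Write $C=8k\ceil{\log_2 k}$, and note that $C\le B$ trivially.

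\textbf{Sizes $2\le j<h$.} Here \cref{lem:overlap}\textup{(i)} gives $x_A\le C(B/M)^{j+1}$, so the contribution of $\mathcal A_j$ is at most
\[
\tfrac{C}{2}\,p^{1-j}B^{j+1}M^{1-j}=\tfrac C2\,B^{j+1}(pM)^{-(j-1)}.
\]
Since $p\ge B^{-9}$ and $M\ge B^{20}$, we have $pM\ge B^{11}$. The contribution is therefore at most $\tfrac C2 B^{12-10j}\le \tfrac C2 B^{-8}B^{-10(j-2)}$. This is a geometric series in $j$, summing to at most $C B^{-8}$.

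\textbf{Size $j=h$.} This is the one place needing care: the bound $x_A\le C(B/M)^h$ from \cref{lem:overlap}\textup{(i)} lacks the extra factor, and $M$ is not bounded above. Grouping as $(pM)^{1-h}M$ would leave an uncontrolled factor of $M$. Instead I will use $p^{1-h}\le B^{9(h-1)}$ and keep the power $M^{2-h}$ separate, relying on $h\ge5$. This gives
\[
a_hC(B/M)^h\le \tfrac C2\,B^{h}B^{9(h-1)}M^{2-h}\le \tfrac C2\,B^{10h-9-20(h-2)}=\tfrac C2\,B^{31-10h}\le \tfrac C2\,B^{-19}.
\]

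Adding the two cases, the total captured weight is at most $2CB^{-8}\le 2B^{-7}$, which is far below $1/100$ since $B=16^k\ge16^3$. I expect the only delicate point to be the bookkeeping in the case $j=h$ just described; the rest is routine arithmetic.
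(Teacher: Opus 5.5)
Your proof is correct and is essentially the paper's argument: a union bound over $\mathcal A$, with \cref{lem:overlap}\textup{(i)} applied to each member, using $|A|\ge2$ for the sizes below $h$ and $h\ge5$ to compensate for the weaker bound when $|A|=h$. The only cosmetic difference is that the paper bounds each $x_A$ directly by $\frac{1}{100}p^{|A|-1}/\binom M2$ and sums against the hypothesis once, whereas you spend the full $p$-weight budget on each size class separately, which costs nothing thanks to the geometric decay in $j$.
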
\begin{proof}
Fix $A\in\mathcal A$ and write $a=|A|$. By \cref{lem:overlap}(i), a direct calculation gives
\[
\sum_{Q\in\mathcal C_h:\,A\subseteq E(Q)}\mu(Q)
\le
\frac1{100}\frac{p^{a-1}}{\binom M2}.
\]
Here we use $a\ge2$ when $a<h$, while the case $a=h$ follows using $h\ge5$.
By the union bound, the total weight of cycles containing some member of
$\mathcal A$ is therefore at most
\[
\frac1{100p\binom M2}
\sum_{A\in\mathcal A}p^{|A|}
\le
\frac1{100}.
\]
\end{proof}

\vspace{-0.4cm}
\subsection{Chord scores and induced cycles}\label{sec:chords}
We now use the overlap estimates above to control the possible chords of
the weighted cycles and show that a sufficiently small chord score forces
an induced cycle. For an $h$-cycle $Q$, let $\operatorname{Ch}(Q)=\binom{V(Q)}2\setminus E(Q)$,
whose elements we call the \emph{possible chords} of $Q$. For
$xy\in E(K_M)$, define
$
c_\mu(xy)
=
\sum_{Q\in\mathcal C_h:\,xy\in\operatorname{Ch}(Q)}\mu(Q),
$
and set $\operatorname{ch}(\mu)=\sum_{xy\in E(K_M)}c_\mu(xy)^2$.
We call $\operatorname{ch}(\mu)$ the \emph{chord score} of $\mu$. A small
chord score means that the possible chords are well distributed.

\begin{lemma}\label{lem:spread-chord}
Let $\mu$ be a spread weight on $\mathcal C_h$. Then
\[
\operatorname{ch}(\mu)
\le
8k\ceil{\log_2 k}h^2
\left(\frac BM\right)^2.
\]
\end{lemma}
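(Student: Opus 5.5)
We bound the chord score with the same max-times-sum trick used in \cref{lem:overlap}(ii)--(iii), applied to the chord loads. Writing
\[
\operatorname{ch}(\mu)=\sum_{xy\in E(K_M)}c_\mu(xy)^2\le\Bigl(\max_{xy}c_\mu(xy)\Bigr)\sum_{xy\in E(K_M)}c_\mu(xy),
\]
it suffices to bound the maximum load and the total load separately.

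\textbf{Total load.} Double counting gives
\[
\sum_{xy}c_\mu(xy)=\sum_{Q}\mu(Q)\,|\operatorname{Ch}(Q)|.
\]
Every $h$-cycle has exactly $\binom h2-h$ possible chords, and the total weight is at most $1$. Hence
\[
\sum_{xy}c_\mu(xy)\le\binom h2-h\le h^2/2.
\]

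\textbf{Maximum load.} Fix $xy$. If $xy\in\operatorname{Ch}(Q)$, then in particular $\{x,y\}\subseteq V(Q)$. Applying \eqref{eq:spread} with $S=\{x,y\}$, which is allowed since $|S|=2\le h$, gives
\[
c_\mu(xy)\le\sum_{Q:\,\{x,y\}\subseteq V(Q)}\mu(Q)\le 8k\ceil{\log_2 k}\left(\frac BM\right)^2 .
\]

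\textbf{Conclusion.} Multiplying the two bounds yields
\[
\operatorname{ch}(\mu)\le 8k\ceil{\log_2 k}\,\frac{h^2}{2}\left(\frac BM\right)^2,
\]
which is stronger than the claimed bound by a factor of $2$. The argument is routine; the only point needing care is that chords are vertex pairs rather than edges of $Q$. So one must use the vertex spread condition with $|S|=2$, not \cref{lem:overlap}(i), and this suffices because the exponent $2$ matches the claimed bound.
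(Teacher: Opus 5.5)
Your proof is correct and is essentially the paper's argument: you bound each load $c_\mu(xy)$ by the spread condition with $S=\{x,y\}$, bound the total load by the number of possible chords per cycle, and multiply. Your count $\binom h2-h\le h^2/2$ is a slightly sharper version of the paper's cruder bound $h^2$ on the total load.
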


\begin{proof}
By \eqref{eq:spread},
$c_\mu(xy)\le8k\ceil{\log_2 k}(B/M)^2$ for every $xy\in E(K_M)$.
Moreover,
$\sum_{xy}c_\mu(xy)\le h^2\sum_{Q\in\mathcal C_h}\mu(Q)\le h^2$.
Hence
\[
\operatorname{ch}(\mu)
\le
\left(\max_{xy}c_\mu(xy)\right)\sum_{xy}c_\mu(xy)
\le
8k\ceil{\log_2 k}h^2
\left(\frac BM\right)^2.
\]
\end{proof}

To understand how the chord score behaves after randomly sparsifying $E(K_M)$, we separate
the contribution coming from pairs of cycles according to their overlap.
For $0\le j\le h$, define
\[
\operatorname{ch}_j(\mu)
=
\sum_{xy\in E(K_M)}
\sum_{\substack{Q,Q'\in\mathcal C_h:\\
xy\in \operatorname{Ch}(Q)\cap\operatorname{Ch}(Q')\\
|E(Q)\cap E(Q')|=j}}
\mu(Q)\mu(Q').
\] 
Thus
$
\operatorname{ch}(\mu)
=
\sum_{j=0}^h\operatorname{ch}_j(\mu).
$
\begin{lemma}\label{lem:chord-overlap}
Let $\mu$ be a spread weight on $\mathcal C_h$. Then
\begin{enumerate}[label=\textup{(\roman*)},leftmargin=2.6em]
\item
\[
\operatorname{ch}_1(\mu)
\le
8k\ceil{\log_2 k}h\binom h3
\left(\frac BM\right)^3.
\]

\item For $2\le j<h$,
\[
\operatorname{ch}_j(\mu)
\le
8k\ceil{\log_2 k}h^2\binom hj
\left(\frac BM\right)^{j+1}.
\]

\item
\[
\operatorname{ch}_h(\mu)
\le
8k\ceil{\log_2 k}h^2
\left(\frac BM\right)^h.
\]
\end{enumerate}
\end{lemma}

\begin{proof}
\noindent\textup{(i)}
Fix $Q,Q'\in\mathcal C_h$ with $E(Q)\cap E(Q')=\{uv\}$ and a common
possible chord $xy$. Since $xy\ne uv$, at least one endpoint of $xy$ lies outside $\{u,v\}$;
choose one such endpoint and call it $z$. Then
$\{u,v,z\}\subseteq V(Q)\cap V(Q')$. For each fixed $Q,Q'$ and chosen
triple $\{u,v,z\}$, the common possible chord has the form $zw$ with
$w\in V(Q)\cap V(Q')$, so there are at most $h$ choices for $w$.
Therefore
\[
\begin{aligned}
\operatorname{ch}_1(\mu)
&=
\sum_{xy}
\sum_{\substack{Q,Q'\in\mathcal C_h:\\
xy\in \operatorname{Ch}(Q)\cap\operatorname{Ch}(Q'),\\
|E(Q)\cap E(Q')|=1}}
\mu(Q)\mu(Q')\\
&\le
h
\sum_{\substack{S\subseteq V(K_M)\\ |S|=3}}
\sum_{\substack{Q,Q'\in\mathcal C_h:\\
S\subseteq V(Q)\cap V(Q')}}
\mu(Q)\mu(Q')
\qquad
\text{(grouping them according to $S=\{u,v,z\}$)}\\
&=
h
\sum_{\substack{S\subseteq V(K_M)\\ |S|=3}}
\left(
\sum_{Q\in\mathcal C_h:\,S\subseteq V(Q)}
\mu(Q)
\right)^2
\\
&\le
8k\ceil{\log_2 k}\,h\binom h3
\left(\frac BM\right)^3
\qquad\qquad\qquad
\text{(by \cref{lem:overlap}(iii)).}
\end{aligned}
\]

\noindent\textup{(ii)}
Let $2\le j<h$. Since two $h$-cycles have fewer than $h^2$ common
possible chords,
\[
\begin{aligned}
\operatorname{ch}_j(\mu)
&\le
h^2
\sum_{\substack{A\subseteq E(K_M)\\ |A|=j}}
\left(
\sum_{Q\in\mathcal C_h:\,A\subseteq E(Q)}\mu(Q)
\right)^2
\quad
\text{(summing over the common $j$-edge set $A$)}\\
&\le
8k\ceil{\log_2 k}\,h^2\binom hj
\left(\frac BM\right)^{j+1}
\qquad 
\quad \ \
\text{(by \cref{lem:overlap}(ii)).}
\end{aligned}
\]

\noindent\textup{(iii)}
If $j=h$, then $Q=Q'$. Hence
\[
\begin{aligned}
\operatorname{ch}_h(\mu)
&\le
h^2\sum_{Q\in\mathcal C_h}\mu(Q)^2
\\
&\le
h^2
\left(\max_{Q\in\mathcal C_h}\mu(Q)\right)
\sum_{Q\in\mathcal C_h}\mu(Q)
\\
&\le
8k\ceil{\log_2 k}\,h^2
\left(\frac BM\right)^h
\qquad\quad \quad 
\text{(by \eqref{eq:spread} and $\sum_{Q\in \mathcal C_h}\mu(Q)\le1$).}
\end{aligned}
\]
\end{proof}

The chord score provides a simple criterion for finding an induced
cycle. If a graph supporting $\mu$ has sufficiently few edges relative to
$\operatorname{ch}(\mu)$, then some cycle is induced.

\begin{lemma}\label{lem:chord-test}
Let $G\subseteq K_M$ support a probability weight $\mu$ on $\mathcal C_h$.
If $e(G)\operatorname{ch}(\mu)<1$, then some $Q\in\mathcal C_h$ with
$\mu(Q)>0$ is induced in $G$.
\end{lemma}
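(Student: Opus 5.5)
We argue by averaging. Let $N(Q)=|E(G)\cap\operatorname{Ch}(Q)|$ be the number of possible chords of $Q$ that are edges of $G$. Since $G$ supports $\mu$, every $Q$ with $\mu(Q)>0$ has all its edges in $G$. Such a $Q$ is therefore induced in $G$ exactly when $N(Q)=0$. It thus suffices to show that
\[
\sum_{Q\in\mathcal C_h}\mu(Q)N(Q)<1 .
\]
Since $\mu$ is a probability weight and $N(Q)$ is a nonnegative integer, this forces some $Q$ with $\mu(Q)>0$ to have $N(Q)=0$.

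To bound the weighted sum, we swap the order of summation:
\[
\sum_{Q}\mu(Q)N(Q)=\sum_{Q}\mu(Q)\sum_{xy\in E(G)}\mathbf 1[xy\in\operatorname{Ch}(Q)]=\sum_{xy\in E(G)}c_\mu(xy).
\]
By the Cauchy--Schwarz inequality,
\[
\sum_{xy\in E(G)}c_\mu(xy)\le \Bigl(e(G)\sum_{xy\in E(G)}c_\mu(xy)^2\Bigr)^{1/2}\le \bigl(e(G)\operatorname{ch}(\mu)\bigr)^{1/2}<1,
\]
where we used that $\operatorname{ch}(\mu)$ sums $c_\mu(xy)^2$ over all of $E(K_M)\supseteq E(G)$. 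This completes the argument.

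There is no real obstacle here. The only points to check are that inducedness of a positive-weight cycle is exactly the condition $N(Q)=0$, which uses the support assumption, and that $\sum_Q\mu(Q)=1$, which makes the averaging valid.
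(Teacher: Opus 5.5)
Your proposal is correct and follows essentially the same argument as the paper. Both compute the $\mu$-expected number of chords in $G$ as $\sum_{xy\in E(G)}c_\mu(xy)$ and bound it by $\sqrt{e(G)\operatorname{ch}(\mu)}<1$ via Cauchy--Schwarz; the only difference is that you phrase it directly rather than by contradiction.
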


\begin{proof}
Suppose that every positive-weight $h$-cycle has a chord in $G$. Choose an $h$-cycle according to $\mu$. Its expected number of chords in $G$ is at least one, while by linearity of expectation and Cauchy--Schwarz,
\[
1\le
\sum_{xy\in E(G)}c_\mu(xy)
\le
\sqrt{e(G)\sum_{xy}c_\mu(xy)^2}
=
\sqrt{e(G)\operatorname{ch}(\mu)}
<1,
\]
a contradiction.
\end{proof}

\subsection{Conditional thinning with controlled chord score}\label{sec:conditional-thinning}
We show that a spread weight on short odd cycles can be thinned, by retaining only those cycles contained in a suitable random edge set, even after conditioning on a decreasing event, while keeping enough total weight to renormalise the surviving weights without losing control of the chord score.

 An event
$\mathcal D\subseteq2^{E(K_M)}$ is \emph{decreasing} if $Y\in\mathcal D$
and $Y'\subseteq Y$ imply $Y'\in\mathcal D$. 

\begin{lemma}\label{lem:conditional-thinning}
Let $k\ge20$, set $B=16^k$, $M=B^{20}$, and $p=B^{-9}$, and let $h$ be odd
with $5\le h\le4\ceil{\log_2 k}$. Let $\mu_0$ be a spread weight on
$\mathcal C_h$ satisfying
$
\sum_{Q\in\mathcal C_h}\mu_0(Q)\ge0.99.
$
Let $\mathcal D\subseteq2^{E(K_M)}$ be a decreasing event of positive
probability, and let $Y$ be a $p$-random subset of $E(K_M)$. Suppose that
\[
\PP(E(Q)\subseteq Y\mid\mathcal D)\ge(p/2)^h
\qquad
\text{for every $Q\in\mathcal C_h$ with $\mu_0(Q)>0$.}
\]
Then some $Y\in\mathcal D$ supports a probability weight on $\mathcal C_h$
with chord score at most $B^{-37}$.
\end{lemma}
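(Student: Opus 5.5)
We use a second-moment argument on the random weight obtained by keeping only the cycles that land in $Y$. For $Y\subseteq E(K_M)$, define
\[
\nu_Y(Q)=\mu_0(Q)\,\mathbbm 1[E(Q)\subseteq Y]\,p^{-h},\qquad W(Y)=\sum_Q\nu_Y(Q).
\]
Write $\E_{\mathcal D}$ for expectation under the conditional law of $Y$ given $\mathcal D$.

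\textbf{The first moment.} By the hypothesis on $\PP(E(Q)\subseteq Y\mid\mathcal D)$ we get
\[
\E_{\mathcal D}W\ge 0.99\cdot 2^{-h}.
\]
Since $h\le 4\ceil{\log_2 k}$, this is at least $k^{-O(1)}$.

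\textbf{The unnormalised chord score.} Its conditional expectation is
\[
\E_{\mathcal D}\operatorname{ch}(\nu_Y)=\sum_{xy}\sum_{Q,Q'}\mu_0(Q)\mu_0(Q')\,p^{-2h}\,\PP\bigl(E(Q)\cup E(Q')\subseteq Y\mid\mathcal D\bigr),
\]
where the inner sum runs over pairs with $xy\in\operatorname{Ch}(Q)\cap\operatorname{Ch}(Q')$. The key point is to bound the conditional probability from above. The event $\{E(Q)\cup E(Q')\subseteq Y\}$ is increasing and $\mathcal D$ is decreasing, so Harris's inequality gives
\[
\PP\bigl(E(Q)\cup E(Q')\subseteq Y\mid\mathcal D\bigr)\le \PP\bigl(E(Q)\cup E(Q')\subseteq Y\bigr)=p^{2h-j},
\]
where $j=|E(Q)\cap E(Q')|$. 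Hence
\[
\E_{\mathcal D}\operatorname{ch}(\nu_Y)\le\sum_{j=0}^h p^{-j}\operatorname{ch}_j(\mu_0).
\]

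\textbf{Bounding each overlap term.} We use $B/M=B^{-19}$ and $p^{-1}=B^9$.
\begin{itemize}
\item For $j=0$, use $\operatorname{ch}_0\le\operatorname{ch}(\mu_0)\le 8k\ceil{\log_2k}h^2B^{-38}$ from \cref{lem:spread-chord}.
\item For $j=1$, \cref{lem:chord-overlap}(i) gives $p^{-1}\operatorname{ch}_1\lesssim B^{9-57}$.
\item For $2\le j<h$, \cref{lem:chord-overlap}(ii) gives $p^{-j}\operatorname{ch}_j\lesssim h^2\binom hj B^{9j-19(j+1)}\le B^{-39}$ up to polynomial factors in $k$.
\item For $j=h$, \cref{lem:chord-overlap}(iii) gives $B^{9h-19h}=B^{-10h}\le B^{-50}$.
\end{itemize}
Since $\binom hj\le 2^h=k^{O(1)}$, all the polynomial-in-$k$ factors are absorbed into $B^{o(1)}$. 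In total,
\[
\E_{\mathcal D}\operatorname{ch}(\nu_Y)\le k^{O(1)}B^{-38}.
\]

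\textbf{Conclusion.} For a normalised weight $\mu_Y=\nu_Y/W(Y)$ we have $\operatorname{ch}(\mu_Y)=\operatorname{ch}(\nu_Y)/W(Y)^2$. We need to select $Y\in\mathcal D$ with $W(Y)$ not too small and $\operatorname{ch}(\nu_Y)$ not too large. To avoid needing concentration of $W$, consider the quantity
\[
\E_{\mathcal D}\bigl[W-\lambda\operatorname{ch}(\nu_Y)\bigr].
\]
Since $W\le\operatorname{poly}$ is not obviously bounded, it is better to average as follows. Choose $\lambda$ with $\lambda k^{O(1)}B^{-38}\le \tfrac12\E_{\mathcal D}W$, for example $\lambda=B^{37.5}$. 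Then some $Y\in\mathcal D$ satisfies
\[
W(Y)\ge\lambda\operatorname{ch}(\nu_Y)+\tfrac12\E_{\mathcal D}W>0.
\]
At this $Y$ we have $W\ge k^{-O(1)}$ and $\operatorname{ch}(\nu_Y)\le W/\lambda$. Hence
\[
\operatorname{ch}(\mu_Y)\le \frac{1}{\lambda W}\le k^{O(1)}B^{-37.5}\le B^{-37},
\]
using $k\ge20$ so that $k^{O(1)}\le B^{1/2}$. Finally, $\mu_Y$ is a probability weight supported by $Y$, as required.

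\textbf{Main obstacle.} The main step is the Harris-inequality bound for the conditioned pair probabilities, which justifies replacing the conditional law by the product measure in the second moment. The remaining care is bookkeeping: checking that every overlap term is at most $B^{-38+o(1)}$, with the worst case being $j=0$ and $j=2$, where $9\cdot2-57=-39$. It is this margin of roughly $B^{-38}$ against the target $B^{-37}$ that absorbs the polynomial factors in $k$ and the loss of $2^{-2h}$ coming from $W^2$.
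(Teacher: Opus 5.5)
Your proposal is correct and is essentially the paper's proof: it has the same first-moment bound from the hypothesis, the same Harris-inequality estimate $\PP(E(Q)\cup E(Q')\subseteq Y\mid\mathcal D)\le p^{2h-j}$, and the same term-by-term use of \cref{lem:spread-chord,lem:chord-overlap} (your factor $p^{-h}$ is only a rescaling). The one deviation is the final selection of $Y$. Writing $Z$ for the surviving $\mu_0$-weight, the paper uses $\E[Z^2\mid\mathcal D]\ge\E[Z\mid\mathcal D]^2$ to find $Y\in\mathcal D$ whose normalised chord score is at most the ratio of conditional expectations, whereas you average $W-\lambda\operatorname{ch}(\nu_Y)$. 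Your route loses only a constant factor, but with $\lambda=B^{37.5}$ the required inequality at $k=20$, $h=19$ holds by a factor of less than $4$, so your ``$k^{O(1)}\le B^{1/2}$'' should be checked explicitly rather than asserted.
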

\begin{proof}
For $Y\subseteq E(K_M)$, let
$
Z(Y)
=
\sum_{Q\in\mathcal C_h:\,E(Q)\subseteq Y}\mu_0(Q)
$
and
\[
W(Y)
=
\sum_{xy\in E(K_M)}
\left(
\sum_{\substack{Q\in\mathcal C_h:\,E(Q)\subseteq Y\\
xy\in\operatorname{Ch}(Q)}}
\mu_0(Q)
\right)^2.
\]
If $Z(Y)>0$, define a probability weight $\mu_Y$ on $\mathcal C_h$ as follows:
for each $h$-cycle $Q\in\mathcal C_h$, set
\[
\mu_Y(Q)
=
\begin{cases}
\mu_0(Q)/Z(Y), & \text{if } E(Q)\subseteq Y,\\
0, & \text{otherwise}.
\end{cases}
\]
Then $\mu_Y$ is a probability weight  supported by $Y$ on $\mathcal C_h$, and its chord
score is $W(Y)/Z(Y)^2$.
\begin{claim}\label{clm:ratio}
There exists $Y\in\mathcal D$ with $Z(Y)>0$ such that
\[
\frac{W(Y)}{Z(Y)^2}
\le
\frac{\E[W\mid\mathcal D]}{\E[Z\mid\mathcal D]^2}.
\]
\end{claim}

\begin{proof}
Otherwise, writing the right-hand side as $A$, we would have
$W(Y)>AZ(Y)^2$ whenever $Z(Y)>0$. Since $Z(Y)=0$ implies $W(Y)=0$,
taking conditional expectations gives
\[
\E[W\mid\mathcal D]
>
A\E[Z^2\mid\mathcal D]
\ge
A\E[Z\mid\mathcal D]^2
=
\E[W\mid\mathcal D],
\]
a contradiction.
\end{proof}

\begin{claim}\label{clm:EW}
We have
\[
\E[Z\mid\mathcal D]\ge0.99(p/2)^h
\qquad\text{and}\qquad
\E[W\mid\mathcal D]
\le
32k\ceil{\log_2 k}^5p^{2h}B^{-38}.
\]
\end{claim}

\begin{proof}
The first inequality follows immediately from the assumption of the lemma.
For the second, let $Q,Q'\in\mathcal C_h$ and write
$j=|E(Q)\cap E(Q')|$. By Harris's
inequality~\cite{Harris}, an increasing event and a decreasing event are
negatively correlated. Since $\{Y : E(Q)\cup E(Q')\subseteq Y\}$ is an increasing event
and $\mathcal D$ is decreasing,
\begin{equation}\label{eq:pair-survival}
\PP(E(Q)\cup E(Q')\subseteq Y\mid\mathcal D)
\le
\PP(E(Q)\cup E(Q')\subseteq Y)
=
p^{2h-j}.
\end{equation}

Expanding the square in the definition of $W$ and taking conditional
expectation, we obtain
\[
\begin{aligned}
\E[W\mid\mathcal D]
&=
\sum_{xy\in E(K_M)}
\sum_{\substack{Q,Q'\in\mathcal C_h\\
xy\in\operatorname{Ch}(Q)\cap\operatorname{Ch}(Q')}}
\mu_0(Q)\mu_0(Q')\,
\PP(E(Q)\cup E(Q')\subseteq Y\mid\mathcal D)\\
&\le
\sum_{j=0}^{h}
p^{2h-j}
\sum_{xy\in E(K_M)}
\sum_{\substack{Q,Q'\in\mathcal C_h\\
xy\in\operatorname{Ch}(Q)\cap\operatorname{Ch}(Q')\\
|E(Q)\cap E(Q')|=j}}
\mu_0(Q)\mu_0(Q')\\
&\le
p^{2h}\operatorname{ch}(\mu_0)
+
p^{2h-1}\operatorname{ch}_1(\mu_0)
+
\sum_{j=2}^{h-1}p^{2h-j}\operatorname{ch}_j(\mu_0)
+
p^h\operatorname{ch}_h(\mu_0).
\end{aligned}
\]

By \cref{lem:spread-chord,lem:chord-overlap}, together with
$M=B^{20}$ and $p=B^{-9}$,
\[
\begin{aligned}
\E[W\mid\mathcal D]
&\le
8k\ceil{\log_2 k}p^{2h}
\left[
h^2B^{-38}
+
h\binom h3 B^{-48}
+
h^2\sum_{j=2}^{h-1}\binom hj B^{-10j-19}
+
h^2B^{-10h}
\right].
\end{aligned}
\]
Since $h\le4\ceil{\log_2 k}$, a direct calculation shows that each of the
four terms in the brackets is at most
$\ceil{\log_2 k}^4B^{-38}$. Hence
$
\E[W\mid\mathcal D]
\le
32k\ceil{\log_2 k}^5p^{2h}B^{-38}.
$
\end{proof}
For the $Y$ given by Claim~\ref{clm:ratio}, Claim~\ref{clm:EW} gives
$
{W(Y)}/{Z(Y)^2}
\le
\frac{32}{0.99^2}k\ceil{\log_2 k}^5 4^h B^{-38}
\le
B^{-37},
$
where the last inequality follows from $k\ge20$ and
$h\le4\ceil{\log_2 k}$.
Thus the probability weight $\mu_Y$ has chord score at most $B^{-37}$.
\end{proof}

\section{Compression into chord-light containers}\label{sec:compression}
We now isolate the edge sets that are already sufficient for the final
induced-cycle argument. Throughout this section, let $k$ be sufficiently large, and let $B=16^k$, $M=B^{20}$, and $p=B^{-9}$. By
\cref{lem:chord-test}, if a $k$-coloured graph with at most $2B^{36}$ edges
has a colour class supporting a probability weight with chord score at most
$B^{-37}$, then it contains a monochromatic induced cycle. This motivates
the following definition.

 An edge set
$I\subseteq E(K_M)$ is \emph{chord-light} if, for some odd $h$ with
$5\le h\le4\ceil{\log_2 k}$, it supports a probability weight on
$\mathcal C_h$ with chord score at most $B^{-37}$. Observe that since $M=B^{20}$, \cref{lem:spread-chord} implies that every edge set
supporting a spread probability weight is chord-light.

The following compression lemma is the key structural reduction behind the
final construction. It reduces the entire class of non-chord-light edge sets
to a family of at most $\exp(B^{32})$ containers, each of which does not support a spread probability weight on
$\mathcal C_h$ for any odd $h$ with
$5\le h\le4\ceil{\log_2 k}$.
Thus, the failure of chord-lightness admits a bounded-complexity description
in terms of containers carrying no spread odd-cycle structure.

\begin{lemma}\label{lem:compression}
There is a family $\mathfrak C\subseteq 2^{E(K_M)}$ such that:
\begin{enumerate}[label=\textup{(\roman*)},leftmargin=2.4em]
\item every non-chord-light edge set $I\subseteq E(K_M)$ is contained in some
$X\in\mathfrak C$;
\item no $X\in\mathfrak C$ supports a spread probability weight on
$\mathcal C_h$ for any odd $h$ with
$5\le h\le4\ceil{\log_2 k}$;
\item $|\mathfrak C|\le\exp(B^{32})$.
\end{enumerate}
\end{lemma}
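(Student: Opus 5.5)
The plan is to run a hypergraph-container (fingerprint) argument on the edge set $E(K_M)$, with the role of ``independent set'' played by the decreasing property of being non-chord-light. First I would note that non-chord-lightness is decreasing: if $Y'\subseteq Y$ and $Y'$ supports a probability weight with chord score at most $B^{-37}$, then so does $Y$. Hence for any $X$ the event $\mathcal D_X=\{Y\subseteq E(K_M): Y\cap X\text{ is non-chord-light}\}$ is decreasing, and \cref{lem:conditional-thinning} can be applied conditionally on $\mathcal D_X$. The count in (iii) should come out of fingerprints of size about $p\binom M2$, since
\[
\binom{\binom M2}{\le 2p\binom M2}^{O(h)}\le \exp\bigl(O(h\,pM^2\log M)\bigr)=\exp\bigl(O(k\log k\cdot B^{31}\cdot k)\bigr)\le \exp(B^{32})
\]
for $M=B^{20}$, $p=B^{-9}$ and $k$ large. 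So the goal is to assign to each non-chord-light $I$ a fingerprint $T\subseteq I$ with $|T|\le 2p\binom M2$ (or $O(h)$ such pieces), together with a deterministic map $T\mapsto X(T)\supseteq I$ whose image never supports a spread probability weight.

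The key step is a single container-shrinking step. Suppose the current candidate container $X$ (initially $E(K_M)$) supports a spread probability weight $\mu$ on $\mathcal C_h$ for some admissible $h$. I would apply \cref{lem:conditional-thinning} in contrapositive form with $\mathcal D=\mathcal D_X$. Call a cycle $Q$ with $\mu(Q)>0$ \emph{good} if $\PP(E(Q)\subseteq Y\mid\mathcal D)\ge(p/2)^h$. If the good cycles carried $\mu$-weight at least $0.99$, then $\mu$ restricted to them would still be spread, so the lemma would give a chord-light set in $\mathcal D_X$, which is impossible. Hence the bad cycles carry weight at least $0.01$. A bad cycle is one that a $p$-random set, conditioned on being non-chord-light inside $X$, rarely contains. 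For a $p$-random $T\subseteq I$ (which is $p$-random conditioned on $\mathcal D$, up to the usual coupling), this means that for most bad $Q$ some nonempty proper part $A\subseteq E(Q)$ lies in $T$ while $E(Q)\setminus A$ is missing from $I$. I would then define the next container by deleting from $X$ every edge $e$ such that, for many bad cycles $Q\ni e$, the rest of $Q$ is already ``witnessed'' by $T$. This is the standard degree-based deletion of the Balogh--Morris--Samotij / Saxton--Thomason algorithm, now weighted by $\mu$.

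To show progress, I would use \cref{lem:p-weight-deletion}. The family $\mathcal A$ of witnessed partial edge sets $A$ ($2\le|A|\le h$) determined by $T$ has total $p$-weight $\sum_A p^{|A|}\lesssim p\binom M2$, because each $A$ is generated by $T$, which has about $p\binom M2$ elements. Hence the cycles whose edge sets contain some member of $\mathcal A$ carry $\mu$-weight at most $1/100$. This forces the witnessed bad cycles, which carry weight $\ge 0.01$, to be accounted for mainly through single-edge deletions. So the step removes from $X$ a set of edges meeting a constant fraction of the $\mu$-mass, measured through the edge degrees of \cref{lem:overlap}(i). By spreadness, each edge has $\mu$-degree at most $O(k\log k)(B/M)^2$, so this deletes at least $\Omega\bigl(B^{-2}M^2/(k\log k)\bigr)$ edges. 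Iterating, after $O(B^{2}k\log k)$ rounds, each with its own fingerprint, the process must stop at some $X$ supporting no spread probability weight for any admissible $h$, which gives (ii); (i) holds because only edges outside $I$ are ever deleted. I would bound the number of rounds more carefully, for instance by letting the fingerprint in each round have size $p\binom M2/\mathrm{poly}(B)$, so that the total fingerprint size stays about $p\binom M2$ and (iii) survives.

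I expect the main obstacle to be the middle step: converting the \emph{conditional} statement of \cref{lem:conditional-thinning} (bad cycles have small survival probability given $\mathcal D_X$) into a \emph{deterministic} deletion rule that depends only on the fingerprint $T$ and genuinely removes no edge of $I$. The first thing I would try is to choose $T$ as a $p$-random subset of $I$ and use the fact that this has the law of $Y\cap I$ with $Y$ a $p$-random subset of $E(K_M)$. Averaging over $I$ then turns ``$E(Q)\not\subseteq Y$ given $\mathcal D$'' into ``some edge of $Q$ is absent from $I$''. The delicate points are controlling the losses from the $1/100$ in \cref{lem:p-weight-deletion} and from the $0.99$ threshold so that each round still makes definite progress.
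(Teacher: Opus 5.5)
Your ingredients are the right ones: a decreasing conditioning event, \cref{lem:conditional-thinning}, \cref{lem:p-weight-deletion}, and a fingerprint count of the same shape as the paper's. However, there is a genuine gap, and it is exactly the step you flag as the main obstacle.

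\textbf{The gap.} Your ``bad'' cycles are defined through $\mathcal D_X$, an event determined by the current container $X$ and not by $I$. Nothing in your setup prevents a bad cycle from lying entirely inside $I$. Non-chord-lightness is a global property of $I$, not the avoidance of fixed configurations; for instance, the edge set of a single $h$-cycle is never chord-light. A container algorithm may discard an element only when the fingerprint certifies that it is not in $I$, and this needs a fixed hypergraph in which $I$ is independent. You have no such hypergraph. So your rule ``delete $e$ if many bad cycles through $e$ are otherwise witnessed by $T$'' can delete edges of $I$, which destroys (i).

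The bridge you propose does not exist. A $p$-random subset of $I$ has the law of $Y\cap I$, which has nothing to do with $Y$ conditioned on $\mathcal D_X$. Hence the assertion that some part of each bad $Q$ lies in $T$ while the rest misses $I$ is never derived.

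The quantitative steps also fail as written.
\begin{itemize}
\item Being built from $T$ gives no $p$-weight bound of order $p\binom M2$. The $2$-subsets of a set of size $p\binom M2$ alone already have total $p$-weight about $p^4M^4/8\gg pM^2$. The bound $\sum_{A\in\mathcal G}p^{|A|}\le p|X|$ in \cref{lem:uniform-container} is a nontrivial output of the container algorithm, not a counting consequence.
\item Bad weight is only guaranteed to exceed $0.01$, while \cref{lem:p-weight-deletion} may absorb $1/100$. So no definite progress per round follows.
\item $\Theta(B^2k\log k)$ rounds with fingerprints of size about $pM^2=B^{31}$ give roughly $\exp(B^{33+o(1)})$ containers, which violates (iii).
\item Shrinking each round's fingerprint by a factor $\mathrm{poly}(B)$ would mean sampling below density $B^{-9}$. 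That is outside the range of \cref{lem:p-weight-deletion} and of the chord-score estimate in \cref{lem:conditional-thinning}.
\end{itemize}

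\textbf{The missing idea.} You need a device that makes the bad cycles forbidden for $I$ itself. The paper gets this from the conditional container lemma of Arag\~ao, Campos, Dahia, Filipe and Marciano (\cref{lem:conditional-container}), applied to the family $\mathcal B$ of minimal chord-light sets. It produces a fingerprint $T\subseteq I$ with $|T|\le2p\binom M2$ and a cover $\mathcal C_T$ of $\mathcal B$ with two properties:
\begin{itemize}
\item $I$ is $\mathcal C_T$-free;
\item every nonempty $A\notin\mathcal C_T$ satisfies $\PP(A\subseteq Y\mid Y\text{ is }\underline\partial_T\mathcal B\text{-free})>(p/2)^{|A|}$.
\end{itemize}
Both $\mathcal C_T$ and the conditioning event are functions of $T$ alone. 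This resolves your ``deterministic rule'' worry. The event is decreasing and forces $Y$ to be non-chord-light.

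Now the edge sets of $h$-cycles containing a member of $\mathcal C_T$ form a genuine $h$-uniform hypergraph $\mathcal H_{T,h}$ in which $I$ is independent. A single application of \cref{lem:uniform-container} gives $X_h\supseteq I$ and a family $\mathcal G_h$ of $p$-weight at most $p\binom M2$.

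Suppose $X_h$ supported a spread probability weight. Removing the at most $1/100$ of weight on cycles containing a member of $\mathcal G_h$ leaves only cycles with $E(Q)\notin\mathcal C_T$. Each of these has conditional survival probability $>(p/2)^h$. Then \cref{lem:conditional-thinning} produces a chord-light set that is $\underline\partial_T\mathcal B$-free, a contradiction.

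No iteration is needed. $\mathfrak C$ consists of the intersections $\bigcap_h X_h$, each determined by $T$ and one $S_h$ (with $|S_h|\le 8h^2p\binom M2$) per odd $h$. A count of the kind you wrote down then gives (iii).
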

We briefly describe the proof of Lemma~\ref{lem:compression}. We regard
$E(K_M)$ as the vertex set of an auxiliary hypergraph. Let
$\mathcal B\subseteq 2^{E(K_M)}$ consist of all chord-light edge sets $J$
such that no proper subset of $J$ is chord-light. Thus the members of
$\mathcal B$ are the minimal obstructions to being non-chord-light, and
\begin{equation}\label{eq:minimal-chord-light}
I\text{ is not chord-light}
\quad\Longleftrightarrow\quad
I\text{ is }\mathcal B\text{-free}.
\end{equation}

Starting from a $\mathcal B$-free edge set $I\subseteq E(K_M)$, we first use
a container argument to encode the obstruction represented by $I$ using a
small fingerprint. This reduces the original non-uniform problem, in which the members of
$\mathcal B$ may have different sizes, to a collection of $h$-uniform
problems, one for each relevant cycle length $h$. 
A second container argument can then be applied separately at each length,
placing $I$ inside a container $X_h$ determined by another small
fingerprint.

The key step is to show that these containers already exclude the structure
we ultimately want to force: no $X_h$ can support a spread probability
weight on $\mathcal C_h$. Conceptually, the first container controls the
forbidden chord-light structure, while the second isolates the relevant
$h$-cycles inside a bounded-complexity set. The conditional thinning lemma, \cref{lem:conditional-thinning}, then
connects the two: if a spread weight survived inside $X_h$, it could be
thinned to produce a chord-light edge set that is still $\mathcal B$-free,
contradicting \eqref{eq:minimal-chord-light}.

Finally, we intersect the containers $X_h$ over all relevant odd lengths.
Each non-chord-light edge set is contained in one such intersection, while
the intersection supports no spread probability weight on any relevant
family of odd cycles. Since every container is determined by a small
collection of fingerprints, there are only few possible intersections.
A direct counting argument then gives
$|\mathfrak C|\le\exp(B^{32})$.

\subsection{Container preliminaries}
The hypergraph container method was developed independently by Balogh,
Morris and Samotij~\cite{BMS} and Saxton and Thomason~\cite{ST}.  We use two recent refinements, stated directly in the edge-set language.

Let $\mathcal F\subseteq 2^{E(K_M)}$ be a family of edge sets. An edge set
$I\subseteq E(K_M)$ is \emph{$\mathcal F$-free} if it contains no member
of $\mathcal F$.  For a family $\mathcal A\subseteq 2^{E(K_M)}$, we say that $\mathcal A$
\emph{covers} $\mathcal F$ if every member of $\mathcal F$ contains some
member of $\mathcal A$. For $T\subseteq E(K_M)$, let
$\underline{\partial}_T\mathcal F=\{F\setminus T:F\in\mathcal F\}$.
In our application, $\mathcal F$ will be the family $\mathcal B$ of minimal
chord-light edge sets.

The first lemma follows from Theorem~4.3 and Observation~4.5 of
Arag\~ao, Campos, Dahia, Filipe and Marciano~\cite{ACDFM}, with
$\alpha=1/2$. It applies to an arbitrary forbidden family $\mathcal F$ and
associates with each $\mathcal F$-free edge set $I$ a small fingerprint
$T\subseteq I$ and a cover $\mathcal C_T$ of $\mathcal F$. It also gives a lower bound on the conditional survival probability of every
$A\notin\mathcal C_T$ under $p$-random sampling; without conditioning, this
probability is simply $p^{|A|}$.

\begin{lemma}[\cite{ACDFM}]
\label{lem:conditional-container}
Let $M\ge2$ and $0<p\le1/2$, and let $\mathcal F\subseteq 2^{E(K_M)}$. Then every $\mathcal F$-free edge set $I$ has a fingerprint
$T\subseteq I$ with $|T|\le2p\binom M2$ and an associated family
$\mathcal C_T\subseteq 2^{E(K_M)}$ such that:

\begin{enumerate}[label=\textup{(\roman*)},leftmargin=2.4em]
\vspace{-0.1cm}
\item $\mathcal C_T$ covers $\mathcal F$;
\vspace{-0.1cm}
\item $I$ is $\mathcal C_T$-free;
\vspace{-0.1cm}
\item Let $Y$ be a $p$-random subset of $E(K_M)$. Then \vspace{-0.1cm}
$$\PP(A\subseteq Y\mid Y\text{ is }\underline{\partial}_T\mathcal F
\text{-free})>(p/2)^{|A|} \quad \text{for every nonempty set
$A\notin\mathcal C_T$}.
$$
\end{enumerate}
\end{lemma}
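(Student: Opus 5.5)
The statement is the case $\alpha=1/2$ of Theorem~4.3 and Observation~4.5 of~\cite{ACDFM}. It also has a short self-contained proof, which I would give as follows. Write $N=\binom M2$. For $T\subseteq E(K_M)$, let $\mathcal D_T$ be the event that $Y$ is $\underline{\partial}_T\mathcal F$-free. Let $\mathcal C_T$ be the family of all nonempty $A\subseteq E(K_M)$ with
\[
\PP(A\subseteq Y\mid\mathcal D_T)\le(p/2)^{|A|}.
\]
With this definition, property (iii) holds by construction. The whole task is therefore to choose $T\subseteq I$ with $|T|\le 2pN$ so that (i) and (ii) hold.

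\textbf{Reformulating $\mathcal D_T$.} For any $F$ we have $F\subseteq Y\cup T$ if and only if $F\setminus T\subseteq Y$. Hence $\mathcal D_T$ is exactly the event that $Y\cup T$ is $\mathcal F$-free.

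\textbf{Property (i) holds for every $T\subseteq I$.} Take $F\in\mathcal F$. Since $I$ is $\mathcal F$-free and $T\subseteq I$, the set $F\setminus T$ is nonempty. On $\mathcal D_T$ we have $F\setminus T\not\subseteq Y$, so $\PP(F\setminus T\subseteq Y\mid\mathcal D_T)=0$. Thus $F\setminus T\in\mathcal C_T$, and $F\setminus T\subseteq F$, so $\mathcal C_T$ covers $\mathcal F$. One small point needs checking: $\PP(\mathcal D_T)>0$. This holds because $\mathcal D_T$ contains the event $\{Y\subseteq I\}$, which has positive probability.

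\textbf{Choosing $T$ to get (ii).} I would use a greedy procedure. Start with $T=\emptyset$. While $I$ contains some $A\in\mathcal C_T$, replace $T$ by $T\cup A$; this keeps $T\subseteq I$. When the procedure stops, $I$ is $\mathcal C_T$-free, which is (ii). We may assume each such $A$ is disjoint from $T$. If $A$ meets $T$, then $A'=A\setminus T$ is nonempty, because $\PP(A\cap T\subseteq Y\mid\mathcal D_T)$ is at least $p^{|A\cap T|}$ by monotonicity in the coordinates of $T$. A similar resampling argument then shows that $A'\in\mathcal C_T$ as well, so we may replace $A$ by $A'$.

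\textbf{Bounding $|T|$; this is the main step.} The potential is $\phi(T)=\PP(\mathcal D_T)$. Let $A$ be disjoint from $T$. The event $\mathcal D_{T\cup A}$ does not depend on the coordinates of $Y$ in $A$, and on $\{A\subseteq Y\}$ it coincides with $\mathcal D_T$. By independence of coordinates,
\[
\PP(\mathcal D_{T\cup A})\,p^{|A|}=\PP(\mathcal D_T\cap\{A\subseteq Y\}).
\]
Rearranging gives
\[
\phi(T\cup A)=\phi(T)\,\frac{\PP(A\subseteq Y\mid\mathcal D_T)}{p^{|A|}}\le 2^{-|A|}\phi(T).
\]
So each edge added to $T$ at least halves $\phi$, and $\phi(T_{\mathrm{final}})\le 2^{-|T_{\mathrm{final}}|}$.

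For the lower bound, since $T\subseteq I$ and $I$ is $\mathcal F$-free, $\mathcal D_T\supseteq\{Y\subseteq I\}$. Therefore
\[
\phi(T)\ge(1-p)^{N-|I|}\ge(1-p)^{N}.
\]
Comparing the two bounds gives $|T|\le N\log_2\bigl(1/(1-p)\bigr)\le 2pN$, where the last step uses $-\log_2(1-p)\le 2p$ for $0<p\le1/2$. This also shows the procedure terminates. The halving identity is where the actual content lies; everything else is bookkeeping.
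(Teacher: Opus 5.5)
Your proof is correct. The paper does not prove this lemma; it imports it as the case $\alpha=1/2$ of Theorem~4.3 and Observation~4.5 of \cite{ACDFM}. You instead give a self-contained argument: define $\mathcal C_T$ by the conditional-probability threshold so that (iii) holds by construction, then build $T$ greedily and bound $|T|$ with the potential $\phi(T)=\PP(\mathcal D_T)$.

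Your route makes explicit several facts the paper uses silently later. First, $\mathcal C_T$ is a function of $T$ alone, which the counting $|\mathfrak C|\le\exp(B^{32})$ in \cref{lem:compression} needs. Second, $\PP(\mathcal D_T)\ge\PP(Y\subseteq I)>0$, which \cref{lem:spread-exclusion} needs. Third, it shows where the $2$ in $|T|\le 2p\binom M2$ comes from, namely $\log_2\bigl(1/(1-p)\bigr)\le 2p$ for $p\le 1/2$. The citation, in exchange, gives the general-$\alpha$ version with no extra work.

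One simplification: your reduction to $A$ disjoint from $T$ is unnecessary. The event $\mathcal D_{T\cup A}$ depends only on coordinates outside $T\cup A$, and it agrees with $\mathcal D_T$ on $\{A\subseteq Y\}$. Hence the identity $\PP(\mathcal D_{T\cup A})\,p^{|A|}=\PP(\mathcal D_T\cap\{A\subseteq Y\})$ holds for every $A$. This gives $\phi(T\cup A)\le 2^{-|A|}\phi(T)\le 2^{-|A\setminus T|}\phi(T)$ directly. Moreover, $A\subseteq T$ is impossible for $A\in\mathcal C_T$, since it would give $\phi(T)\le 2^{-|A|}\phi(T)$ with $\phi(T)>0$. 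Relatedly, for $B\subseteq T$ one has $\PP(B\subseteq Y\mid\mathcal D_T)=p^{|B|}$ exactly, not merely at least, because $\mathcal D_T$ ignores the coordinates in $T$.
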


In our application, we take $\mathcal F=\mathcal B$. Thus, for each $\mathcal B$-free (equivalently, non-chord-light) edge set
$I$, we obtain a fingerprint $T$ and the associated cover $\mathcal C_T$. For an odd integer $h$ with
$5\le h\le4\ceil{\log_2 k}$, we then take
$\mathcal H\subseteq\binom{E(K_M)}{h}$ to be the family of edge sets of
$h$-cycles containing some member of $\mathcal C_T$. Thus every $\mathcal C_T$-free edge set is $\mathcal H$-free, allowing us
to apply the $h$-uniform container lemma.
In summary, for the fingerprint $T$ associated with $I$, we have
\[
I\text{ is not chord-light}
\quad\Longleftrightarrow\quad
I\text{ is }\mathcal B\text{-free}
\quad\Longrightarrow\quad
I\text{ is }\mathcal C_T\text{-free}
\quad\Longrightarrow\quad
I\text{ is }\mathcal H\text{-free}.
\]

We will also use Theorem~A of Campos and Samotij~\cite{CS}. It places every
$\mathcal H$-free edge set inside a container $X$. It also gives a family $\mathcal G\subseteq 2^{X}$ controlling the members of $\mathcal H$ contained in
$X$, with total $p$-weight at most $p|X|$. 
\begin{lemma}[\cite{CS}]\label{lem:uniform-container}
Let $r\ge2$, $\mathcal H\subseteq\binom{E(K_M)}{r}$, and let
$I\subseteq E(K_M)$ be $\mathcal H$-free. Suppose that
$0<p\le1/(8r^2)$. Then there is a fingerprint $S\subseteq I$ with
$|S|\le8r^2p\binom M2$ and a container $X_S\subseteq E(K_M)$, such that $I\subseteq X_S$. Moreover, there is a family $\mathcal G\subseteq 2^{X_S}$ such that:
\begin{enumerate}[label=\textup{(\roman*)},leftmargin=2.4em]\vspace{-0.1cm}
\item every $F\in\mathcal H$ with $F\subseteq X_S$ contains some
$A\in\mathcal G$;

\vspace{-0.1cm}\item every $A\in\mathcal G$ has at least two edges;

\vspace{-0.1cm}\item $\sum_{A\in\mathcal G}p^{|A|}\le p|X_S|$.
\end{enumerate}
\end{lemma}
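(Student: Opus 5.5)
This lemma is quoted from Campos and Samotij~\cite{CS}, so the plan is to derive it from their Theorem~A rather than reprove the container machinery. I would regard the ground set $E(K_M)$, of size $N=\binom M2$, as the vertex set of the $r$-uniform hypergraph $\mathcal H$, and apply Theorem~A to $\mathcal H$ with the given $p$.

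First, I would recall the shape of Theorem~A. For an $r$-uniform hypergraph on $N$ vertices and a parameter $p$ below a threshold of order $1/r^2$, every independent set $I$ has a fingerprint $S\subseteq I$ of size $O(r^2pN)$. The fingerprint determines a container $X_S\supseteq I$ together with a family $\mathcal G\subseteq 2^{X_S}$ of sets of size between $2$ and $r$. This family covers every edge of $\mathcal H[X_S]$ and satisfies the weighted bound $\sum_{A\in\mathcal G}p^{|A|}\le p|X_S|$. Informally, inside the container the hypergraph is no denser than a $p$-random set would tolerate.

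Second, I would check that the constants match. With $N=\binom M2$, the fingerprint bound becomes $|S|\le 8r^2p\binom M2$, and the hypothesis $p\le 1/(8r^2)$ is exactly the admissible range. I would also verify the two remaining features. Properties~(i) and~(iii) are the covering and $p$-weight conclusions of Theorem~A. The containment $\mathcal G\subseteq 2^{X_S}$ holds because each member of $\mathcal G$ arises as a subset of an edge lying in $X_S$.

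The only point needing care is property~(ii), that no member of $\mathcal G$ is a singleton. I would get this from how the Campos--Samotij algorithm treats the final, degree-one level. There a vertex $v$ whose link degree is too large is either placed in the fingerprint, if $v\in I$, or removed from the container, if $v\notin I$. Consequently no singleton ever needs to be recorded in the cover. If that reading of their statement turned out to be wrong, my fallback would be to delete from $X_S$ every vertex $v$ with $\{v\}\in\mathcal G$. Such a $v$ cannot lie in $I$: the set $\mathcal G$ is a cover of $\mathcal H[X_S]$ that the procedure constructs only from sets avoided by $I$. Deleting these vertices keeps $I\subseteq X_S$, preserves~(i) with the remaining sets, and only decreases the left side of~(iii). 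Reconciling this bookkeeping, and the precise dependence of $\mathcal G$ on $S$ alone, with the exact formulation in~\cite{CS} is the step I expect to be the main obstacle.
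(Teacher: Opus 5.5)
Your plan matches the paper, which gives no separate argument and simply quotes Theorem~A of Campos and Samotij~\cite{CS} on the ground set $E(K_M)$ of size $\binom M2$; also, (ii) is genuinely part of that theorem, since without it $X_S=E(K_M)$ together with all singletons would satisfy (i) and (iii) trivially. One small correction to your fallback: removing a vertex $v$ with $\{v\}\in\mathcal G$ also lowers the right-hand side $p|X_S|$ of (iii) by $p$, so (iii) survives not because only the left side decreases, but because the left side loses at least the term $p$ contributed by $\{v\}$.
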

In our application, we take $r=h$. Since $I$ is
$\mathcal H$-free, the lemma gives a container $X_h$ with $I\subseteq X_h$. Every $F\in\mathcal H$ with $F\subseteq X_h$ contains some
$A\in\mathcal G_h$. Discard from $\mathcal G_h$ every edge set which is not
contained in any member of $\mathcal H$. The remaining family still covers
all members of $\mathcal H$ contained in $X_h$, and every
$A\in\mathcal G_h$ satisfies $2\le |A|\le h$. Moreover,
\[
\sum_{A\in\mathcal G_h}p^{|A|}
\le p|X_h|
\le p\binom M2.
\]
By \cref{lem:p-weight-deletion}, the cycles containing some member of
$\mathcal G_h$ have total spread weight at most $1/100$. Thus, from any
spread probability weight supported on $X_h$, we may delete at most
$1/100$ of its total weight so that every remaining positive-weight cycle
$Q$ lies outside $\mathcal H$, and hence contains no member of
$\mathcal C_T$.
 In particular, $E(Q)\notin\mathcal C_T$, so
\cref{lem:conditional-container}, applied with $A=E(Q)$, gives
\[
\PP(E(Q)\subseteq Y\mid
Y\text{ is }\underline\partial_T\mathcal B\text{-free})
>
(p/2)^h.
\]
This is the lower bound required in \cref{lem:conditional-thinning}.
\subsection{The compression argument}

The setup above associates to each $\mathcal B$-free edge set $I$, a
fingerprint $T$ and cover $\mathcal C_T$, and then, for each relevant odd
$h$, a second fingerprint $S_h$, a container $X_h$, and a family
$\mathcal G_h$. The key point is that $X_h$ cannot support a spread
probability weight. 

\begin{lemma}\label{lem:spread-exclusion}
Let $I$ be a $\mathcal B$-free edge set, and let $T$ and $\mathcal C_T$ be
supplied by \cref{lem:conditional-container}. For an odd $h$ with
$5\le h\le4\ceil{\log_2 k}$, let $\mathcal H_{T,h}$ be the family of edge
sets of $h$-cycles containing some member of $\mathcal C_T$, and let
$S_h$, $X_h$, and $\mathcal G_h$ be supplied by
\cref{lem:uniform-container}. Then $X_h$ supports no spread probability
weight on $\mathcal C_h$.
\end{lemma}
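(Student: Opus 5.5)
We argue by contradiction: suppose $X_h$ supports a spread probability weight $\mu$ on $\mathcal C_h$. The plan is to trim $\mu$ using $\mathcal G_h$, then thin it with \cref{lem:conditional-thinning} under the decreasing event coming from the fingerprint $T$. This produces a chord-light set, and we show it contradicts the $\mathcal B$-freeness that is built into that event.

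\textbf{Trimming.} Since every $A\in\mathcal G_h$ satisfies $2\le|A|\le h$ and $\sum_{A\in\mathcal G_h}p^{|A|}\le p\binom M2$, with $p=B^{-9}$ and $M=B^{20}$, \cref{lem:p-weight-deletion} applies. Deleting the weight of all cycles whose edge set contains a member of $\mathcal G_h$ removes at most $1/100$ of the total. Call the result $\mu_0$.

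Then $\mu_0\le\mu$ pointwise, so $\mu_0$ is still spread, and its total weight is at least $0.99$. Take any $Q$ with $\mu_0(Q)>0$. Since $E(Q)\subseteq X_h$ and $E(Q)$ contains no member of $\mathcal G_h$, property (i) of \cref{lem:uniform-container} shows $E(Q)\notin\mathcal H_{T,h}$. Hence $E(Q)$ contains no member of $\mathcal C_T$; in particular $E(Q)\notin\mathcal C_T$. Property (iii) of \cref{lem:conditional-container} then gives
\[
\PP(E(Q)\subseteq Y\mid \mathcal D)>(p/2)^h,
\qquad
\mathcal D=\{Y: Y\text{ is }\underline\partial_T\mathcal B\text{-free}\}.
\]

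\textbf{Thinning.} The event $\mathcal D$ is decreasing. It has positive probability, since $\emptyset\in\mathcal D$ provided $\emptyset\notin\underline\partial_T\mathcal B$. That holds because every $J\in\mathcal B$ satisfies $J\not\subseteq T$: indeed $T\subseteq I$ and $I$ is $\mathcal B$-free. So \cref{lem:conditional-thinning} yields some $Y\in\mathcal D$ supporting a probability weight on $\mathcal C_h$ with chord score at most $B^{-37}$. Thus $Y$ is chord-light.

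\textbf{The contradiction.} Since $Y$ is chord-light, it contains some minimal chord-light set $J\in\mathcal B$. Because $J\setminus T\subseteq Y$ and $J\setminus T\in\underline\partial_T\mathcal B$, this contradicts $Y\in\mathcal D$.

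The main point to check is that chord-lightness is monotone upward, so that a chord-light $Y$ really does contain a member of $\mathcal B$. This holds because a weight supported by $Y$ is supported by every superset of $Y$, and the chord score depends only on the weight, not on the ambient edge set. With monotonicity in hand, finiteness gives a minimal chord-light subset of $Y$, and that subset lies in $\mathcal B$.

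Note that $Y$ need not lie inside $I$; this is harmless, since the contradiction comes from $\mathcal D$ itself and not from $I$. Finally, \cref{lem:conditional-thinning} requires $k\ge20$, which is covered by the standing assumption that $k$ is sufficiently large.
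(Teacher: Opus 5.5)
Your proof is correct and follows the paper's argument step for step. You trim $\mu$ to $\mu_0$ using $\mathcal G_h$ and \cref{lem:p-weight-deletion}, get the $(p/2)^h$ conditional survival bound from \cref{lem:conditional-container}(iii), apply \cref{lem:conditional-thinning} under the decreasing event $\mathcal D$, and contradict the fact that every member of $\mathcal D$ is $\mathcal B$-free.

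Two small remarks. First, the paper explicitly discards members of $\mathcal G_h$ not contained in any member of $\mathcal H_{T,h}$ to get $|A|\le h$, which you take for granted; this is harmless, since larger sets lie in no $E(Q)$. Second, your upward-monotonicity check is not needed, because an inclusion-minimal chord-light subset of $Y$ lies in $\mathcal B$ by finiteness alone.
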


\begin{proof}
Suppose that $X_h$ supports  a spread probability weight $\mu$ on
$\mathcal C_h$.

\begin{claim}\label{clm:prune-spread}
There is a spread weight $\mu_0$ on $\mathcal C_h$ such that
\[
\sum_{Q\in\mathcal C_h}\mu_0(Q)\ge0.99,
\]
and every $Q$ with $\mu_0(Q)>0$ satisfies
$E(Q)\notin\mathcal H_{T,h}$.
\end{claim}

\begin{proof}
Discard from $\mathcal G_h$ every edge set $A\in\mathcal G_h$ for which there
is no cycle $Q$ with  $E(Q)\in\mathcal H_{T,h}$ such that
$A\subseteq E(Q)$. The remaining family still satisfies \cref{lem:uniform-container}(i), and
every remaining $A$ satisfies $2\le |A|\le h$. By
\cref{lem:uniform-container}(iii),
\[
\sum_{A\in\mathcal G_h}p^{|A|}
\le
p|X_h|
\le
p\binom M2.
\]
By \cref{lem:p-weight-deletion}, we may restrict $\mu$ to a spread weight
$\mu_0$ of total weight at least $0.99$ such that no positive-weight cycle
contains a member of $\mathcal G_h$. Since $\mu$ is supported on $X_h$,
every $Q$ with $\mu_0(Q)>0$ satisfies $E(Q)\subseteq X_h$. Hence
$E(Q)\notin\mathcal H_{T,h}$ by \cref{lem:uniform-container}(i), and
therefore $E(Q)$ contains no member of $\mathcal C_T$.
\end{proof}

Let $\mathcal D$ be the event that a $p$-random subset of $E(K_M)$ is
$\underline{\partial}_T\mathcal B$-free. Since $T$ is contained in the
$\mathcal B$-free edge set $I$, the event $\mathcal D$ has positive
probability.

If $\mu_0(Q)>0$, then  $E(Q)\notin\mathcal H_{T,h}$. So $E(Q)$ contains no
member of $\mathcal C_T$. In particular, $E(Q)\notin\mathcal C_T$, and
\cref{lem:conditional-container} gives
$
\PP(E(Q)\subseteq Y\mid\mathcal D)>(p/2)^h.
$
Hence \cref{lem:conditional-thinning} gives some $Y\in\mathcal D$ which is
chord-light. But every $Y\in\mathcal D$ is $\mathcal B$-free, and hence is
not chord-light by \eqref{eq:minimal-chord-light}, a contradiction.
\end{proof}

\begin{proof}[Proof of \cref{lem:compression}]
For every possible first fingerprint $T$ and every tuple of second
fingerprints $(S_h)_h$ arising from the setup above, let $X_h$ be the
corresponding container and set
\[
X(T,(S_h)_h)
=
\bigcap_{\substack{5\le h\le4\ceil{\log_2 k}\\h\text{ odd}}}X_h.
\]
Let $\mathfrak C$ consist of all such intersections.

\begin{claim}\label{clm:compression-cover}
Every non-chord-light edge set is contained in some member of
$\mathfrak C$, and no member of $\mathfrak C$ supports a spread probability
weight on $\mathcal C_h$ for any odd $h$ with
$5\le h\le4\ceil{\log_2 k}$.
\end{claim}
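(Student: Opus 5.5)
The claim has two parts: a covering statement and an exclusion statement. Both follow almost immediately from the setup together with \cref{lem:spread-exclusion}; the only real content is a monotonicity observation.

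\emph{Covering.} Let $I$ be a non-chord-light edge set. By \eqref{eq:minimal-chord-light}, $I$ is $\mathcal B$-free, so \cref{lem:conditional-container} with $\mathcal F=\mathcal B$ supplies a fingerprint $T\subseteq I$ and a cover $\mathcal C_T$ with $I$ being $\mathcal C_T$-free. For each odd $h$ with $5\le h\le4\ceil{\log_2 k}$, form $\mathcal H_{T,h}$ as in the setup. Since every member of $\mathcal H_{T,h}$ contains a member of $\mathcal C_T$, the set $I$ is $\mathcal H_{T,h}$-free. We also have $p=B^{-9}\le1/(8h^2)$, because $h=O(\log k)$ and $k$ is large. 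Hence \cref{lem:uniform-container} with $r=h$ gives a fingerprint $S_h\subseteq I$ and a container $X_h$ with $I\subseteq X_h$. Doing this for every odd $h$ yields the tuple $(S_h)_h$, and $I\subseteq\bigcap_h X_h=X(T,(S_h)_h)\in\mathfrak C$.

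The one point to be careful about is that $X(T,(S_h)_h)$ must be well defined, that is, $X_h$ must depend only on $T$, $h$ and $S_h$, not on $I$. This holds because the container in \cref{lem:uniform-container} is a function of the fingerprint (and $\mathcal H_{T,h}$, which depends only on $T$). So $\mathfrak C$ is indexed by fingerprints, and the intersection formed from $I$ is indeed a member of $\mathfrak C$.

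\emph{Exclusion.} Fix $X=X(T,(S_h)_h)\in\mathfrak C$ and an odd $h$ in range. By definition, $(T,(S_h)_h)$ arises from some $\mathcal B$-free edge set $I$ through the setup. So \cref{lem:spread-exclusion}, applied to this $I$, shows that $X_h$ supports no spread probability weight on $\mathcal C_h$.

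Support is monotone: if $J\subseteq J'$ and $J$ supports $\mu$, then $J'$ supports $\mu$, since every positive-weight cycle has $E(Q)\subseteq J\subseteq J'$. Since $X\subseteq X_h$, a spread probability weight on $\mathcal C_h$ supported by $X$ would also be supported by $X_h$, a contradiction. This holds for every relevant $h$, which proves the second assertion.

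I expect no real obstacle here. The only care needed is the bookkeeping that containers are functions of fingerprints, so that the ``arising from the setup'' indexing makes $\mathfrak C$ well defined and lets \cref{lem:spread-exclusion} be invoked for each member.
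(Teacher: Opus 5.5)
Your proof is correct and follows the paper's argument: you cover $I$ by applying \cref{lem:conditional-container} and then \cref{lem:uniform-container} for each odd $h$, and you deduce exclusion from \cref{lem:spread-exclusion} together with $X(T,(S_h)_h)\subseteq X_h$. Your extra bookkeeping is accurate and only makes explicit steps the paper leaves implicit: the check $p\le 1/(8h^2)$, the observation that the containers depend only on the fingerprints (so $\mathfrak C$ is well defined), and the monotonicity of support.
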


\begin{proof}
Let $I$ be non-chord-light. By \eqref{eq:minimal-chord-light}, $I$ is
$\mathcal B$-free, so \cref{lem:conditional-container} gives a fingerprint
$T$ and cover $\mathcal C_T$. Since $I$ is $\mathcal C_T$-free, it is
$\mathcal H_{T,h}$-free for every relevant $h$. Applying
\cref{lem:uniform-container} for each $h$ gives fingerprints $S_h$ and
containers $X_h$ with $I\subseteq X_h$. Hence
\(
I\subseteq X(T,(S_h)_h).
\)

By \cref{lem:spread-exclusion}, each $X_h$ supports no spread probability
weight on $\mathcal C_h$. Since $X(T,(S_h)_h)\subseteq X_h$, the same holds
for every member of $\mathfrak C$.
\end{proof}

It remains to prove \textup{(iii)}. The first fingerprint has at most
$2p\binom M2$ edges, while for each relevant $h$ the second fingerprint has
at most $8h^2p\binom M2$ edges. 
Using
$
\sum_{j\le uN}\binom Nj\le(e/u)^{uN}
$
for $0<u\le1/2$, we obtain
\[
\begin{aligned}
|\mathfrak C|
&\le
\left(\frac{e}{2p}\right)^{2p\binom M2}
\prod_{\substack{5\le h\le4\ceil{\log_2 k}\\ h\text{ odd}}}
\left(\frac{e}{8h^2p}\right)^{8h^2p\binom M2}\\
&\le
\exp\left(100p\binom M2\ceil{\log_2 k}^3\log(1/p)\right)
\le
\exp(B^{32})
\end{aligned}
\]
for all sufficiently large $k$.

\end{proof}
We have now established the main structural lemma needed for the proof of
\cref{thm:gadget}. We will also use the following simple greedy observation.\begin{lemma}\label{lem:greedy-hitting}
Let $\mathfrak C\subseteq 2^{E(K_M)}$. Suppose that \vspace{-0.1cm}
\[
\left|E(K_M)\setminus\bigcup_{i=1}^k X_i\right|
\ge
\delta\binom M2
\quad\text{for every }X_1,\ldots,X_k\in\mathfrak C.
\]
Then there is an edge set $F\subseteq E(K_M)$ of size at most
$\left({2k\log|\mathfrak C|}\right)/{\delta}$ such that \vspace{-0.1cm}
\[
F\not\subseteq\bigcup_{i=1}^k X_i
\quad\text{for every }X_1,\ldots,X_k\in\mathfrak C.
\]
\end{lemma}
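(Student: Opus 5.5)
A probabilistic (or equivalently greedy) construction will do it. Let $N=\binom M2$ and pick $F$ as a set of $m$ edges of $E(K_M)$ chosen independently and uniformly at random with replacement, where $m=\lceil 2k\log|\mathfrak C|/\delta\rceil$; the resulting set has size at most $m$. Assume $|\mathfrak C|\ge2$, since otherwise $\log|\mathfrak C|=0$ and the statement is vacuous or trivial. (If $|\mathfrak C|=1$, a single uncovered edge works as long as $\delta\binom M2>0$. Any rounding slack is absorbed into the factor $2$.)

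Fix a $k$-tuple $(X_1,\dots,X_k)\in\mathfrak C^k$. By hypothesis, the uncovered set $E(K_M)\setminus\bigcup_i X_i$ has at least $\delta N$ edges, so each sampled edge lands in it with probability at least $\delta$. Hence
\[
\PP\Bigl(F\subseteq \textstyle\bigcup_{i}X_i\Bigr)\le(1-\delta)^m\le e^{-\delta m}\le |\mathfrak C|^{-2k}.
\]
There are at most $|\mathfrak C|^k$ such tuples, so by the union bound the probability that some tuple covers $F$ is at most $|\mathfrak C|^{-k}<1$. Therefore some outcome $F$ is not covered by any $k$-tuple, and $|F|\le m$.

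The only point needing care is the constant. The factor $2$ both overpays the union bound (we only need exponent $k$, we get $2k$) and absorbs the ceiling in $m$: for $|\mathfrak C|\ge2$ we have $2k\log|\mathfrak C|/\delta\ge 1$, so $\lceil\cdot\rceil$ can be handled by taking $m=\lfloor 2k\log|\mathfrak C|/\delta\rfloor$ instead, since $e^{-\delta m}\le e^{-2k\log|\mathfrak C|+\delta}<|\mathfrak C|^{-k}$ (as $\delta\le1$ and $k\log|\mathfrak C|>1$ when $k\ge2$). There is no genuine obstacle here.

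Alternatively, a greedy argument reaches the same bound. Repeatedly add the edge lying outside $\bigcup X_i$ for the largest fraction of the still-surviving $k$-tuples. Each step kills at least a $\delta$-fraction of the survivors by averaging, so after $k\log|\mathfrak C|/\delta$ steps fewer than one tuple remains.
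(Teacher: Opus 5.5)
Your proof is correct. It is essentially the paper's argument: the paper uses exactly the greedy version in your last paragraph (at most $|\mathfrak C|^k$ uncovered sets, each of size at least $\delta\binom M2$, so some edge meets a $\delta$-fraction of the survivors, and $(k\log|\mathfrak C|+1)/\delta\le 2k\log|\mathfrak C|/\delta$ steps suffice), and your random-sampling/union-bound argument is its probabilistic form.

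One small correction: the case $|\mathfrak C|=1$ is not trivial. There the stated bound is $0$, so $F=\emptyset$ is forced and the literal statement is false. Both your rounding step and the paper's absorption of the $+1$ therefore implicitly need $k\log|\mathfrak C|\ge1$. This is harmless in the application, where only an upper bound on $\log|\mathfrak C|$ is used.
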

\begin{proof}
There are at most $|\mathfrak C|^k$ sets of the form
$E(K_M)\setminus\bigcup_{i=1}^k X_i$, and each has size at least
$\delta\binom M2$. If $r$ of them remain disjoint from the chosen edges,
some edge belongs to at least $\delta r$ of them. Thus each new edge reduces
their number by a factor of at most $1-\delta$. After at most
$(k\log|\mathfrak C|+1)/\delta\le2k\log|\mathfrak C|/\delta$ choices, we have
$
F\not\subseteq\bigcup_{i=1}^k X_i
$
for every $X_1,\ldots,X_k\in\mathfrak C$.
\end{proof}

\section{Proof of \cref{thm:gadget}}\label{sec:gadget-proof}
It suffices to prove the result for sufficiently large $k$. Recall that
$B=16^k$ and $M=B^{20}$.
By \cref{lem:compression}, there is a family
$\mathfrak C\subseteq2^{E(K_M)}$ satisfying
\cref{lem:compression}(i)--(iii). By
\cref{lem:compression}(ii) and \cref{lem:many-uncovered}, \vspace{-0.1cm}
\[
\left|E(K_M)\setminus\bigcup_{i=1}^k X_i\right|
\ge
B^{-3}\binom M2
\qquad
\text{for every }X_1,\ldots,X_k\in\mathfrak C.
\]
Since $|\mathfrak C|\le\exp(B^{32})$, \cref{lem:greedy-hitting} gives a
subgraph $F\subseteq K_M$ with
$
e(F)\le2kB^{35}\le2B^{36}
$
such that \vspace{-0.2cm}
\[
E(F)\not\subseteq\bigcup_{i=1}^k X_i
\qquad
\text{for every }X_1,\ldots,X_k\in\mathfrak C.
\]

Now consider any $k$-edge-colouring of $F$, and let
$I_1,\ldots,I_k$ be its colour classes. We claim that some $I_i$ is
chord-light. Otherwise, by \cref{lem:compression}(i), for each $i$ there
is some $X_i\in\mathfrak C$ with $I_i\subseteq X_i$. Hence
$E(F)=\bigcup_{i=1}^k I_i
\subseteq
\bigcup_{i=1}^k X_i,
$
contrary to the choice of $F$.

Thus some colour class, say $I_c$, is chord-light. By definition, there are
an odd $h$ with $5\le h\le4\ceil{\log_2 k}$ and a probability weight
$\mu$ on $\mathcal C_h$, supported by $I_c$, such that
$\operatorname{ch}(\mu)\le B^{-37}$. Note that $
e(F)\operatorname{ch}(\mu)
\le
2B^{36}B^{-37}
<1.
$
By \cref{lem:chord-test}, some $Q\in\mathcal C_h$ with $\mu(Q)>0$ is
induced in $F$. Since $\mu$ is supported by $I_c$, the cycle $Q$ is
monochromatic. Finally,
$
e(F)\le2B^{36}=e^{O(k)},
$
which proves the theorem.
\qed
\section{Large holes in random regular graphs}
\label{sec:random-regular-hole}
The distributional viewpoint described in the introduction reappears here
in a different form. In the Ramsey argument, we place a spread distribution
directly on the candidate cycles, so that their possible chords are well
distributed. Here, instead, we construct a well-distributed independent
reservoir and search for an induced cycle inside it. Thus the two arguments
use different mechanisms, but in both cases the inducedness constraint is
handled only after introducing a suitably distributed auxiliary object.

The proof uses two independent random regular graphs. We split the target
degree $d$ as $d=r+s$, where $r$ is of order $d/\log d$. The graph of
degree $s$ typically has an independent set of order $n\log d/d$, and we
choose such an independent set $I$ in a sufficiently symmetric way so that
it behaves, for our purposes, like a random vertex set of density about
$1/r$. The other graph, of degree $r$, is typically pseudorandom, and a
result of Diskin, Krivelevich, Markbreit and Zhukovskii \cite{DKMZ} then gives a long
induced cycle inside $I$.

Since $I$ is independent in the first graph, this graph contributes no
additional edges inside the cycle, so the cycle remains induced in the
union. Finally, after conditioning the two random regular graphs to be
edge-disjoint, a result of Wormald~\cite{WormaldSurvey} allows us to
transfer this high-probability property to the uniform random
$d$-regular graph.

\subsection{A spread independent set}
We now show how to choose the independent set so that it still behaves like
a typical random vertex set.

\begin{lemma}\label{lem:product-reservoir}
Let $n\ge2$ and $1\le s\le n-1$ be integers with $2\mid ns$, and let
$0\le p\le1$. Let $K\sim G_{n,s}$ and let
$X\sim\operatorname{Bin}(n,p)$ be independent of $K$. Given $(K,X)$, if
$X\le\alpha(K)$, choose $I$ uniformly from the independent $X$-sets of
$K$; otherwise, set $I=\varnothing$.

If $Y$ is a $p$-random subset
of $[n]$, then, for every $\mathcal P\subseteq2^{[n]}$,
\begin{equation}\label{eq:product-reservoir-close}
\PP(I\in\mathcal P)
\ge
\PP(Y\in\mathcal P)-\PP(X>\alpha(K)).
\end{equation}
\end{lemma}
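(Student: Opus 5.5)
The plan is to use the symmetry of $G_{n,s}$ under relabelling of vertices. The key claim is that, conditionally on $X=x$ with $x\le\alpha(K)$, the set $I$ is a uniformly random $x$-subset of $[n]$. Once this is established, the inequality follows by comparing with $Y$. The point is that $Y$, conditioned on $|Y|=x$, is also uniform on $\binom{[n]}{x}$, and $|Y|\sim\operatorname{Bin}(n,p)$ has the same law as $X$.

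\textbf{Step 1 (exchangeability).} For a permutation $\sigma$ of $[n]$, the graph $\sigma(K)$ has the same law as $K$, since $G_{n,s}$ is uniform over labelled $s$-regular graphs. The rule choosing $I$ given $(K,X)$ commutes with relabelling: the independent $x$-sets of $\sigma(K)$ are exactly the images under $\sigma$ of those of $K$, and $\alpha(\sigma(K))=\alpha(K)$. Also, $X$ is independent of $K$. Hence the triple $(\sigma(K),X,\sigma(I))$ has the same joint law as $(K,X,I)$. Therefore, for every $x$ and every $x$-set $A$,
\[
\PP(I=A,\ X=x,\ X\le\alpha(K))
\]
is invariant under $A\mapsto\sigma(A)$. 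Since the symmetric group is transitive on $x$-sets, this quantity equals $q_x/\binom nx$, where
\[
q_x=\PP(X=x,\ x\le\alpha(K)).
\]

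\textbf{Step 2 (comparison).} Fix $\mathcal P$ and let $\mathcal P_x=\mathcal P\cap\binom{[n]}{x}$. Restricting to the event $X\le\alpha(K)$ gives
\[
\PP(I\in\mathcal P)\ \ge\ \sum_x q_x\frac{|\mathcal P_x|}{\binom nx}.
\]
Since $X$ and $K$ are independent,
\[
q_x=\PP(X=x)-\PP(X=x,\ x>\alpha(K)).
\]
Moreover, $\PP(Y\in\mathcal P)=\sum_x \PP(X=x)\,|\mathcal P_x|/\binom nx$, because $|Y|$ has the same law as $X$ and $Y$ is uniform given its size. Subtracting the two expressions and using $|\mathcal P_x|/\binom nx\le1$, the deficit is at most
\[
\sum_x\PP(X=x,\ X>\alpha(K))=\PP(X>\alpha(K)),
\]
which is exactly \eqref{eq:product-reservoir-close}.

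The only step needing care is Step 1. One must check that the conditional law of $I$ given $X=x$ on the good event is uniform, even though for a fixed $K$ the choice is uniform only among the independent sets of that particular $K$. The exchangeability argument above handles this. No earlier results from the paper are needed.
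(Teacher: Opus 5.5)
Your proof is correct and follows essentially the same route as the paper: relabelling-invariance of $G_{n,s}$ makes $I$ uniform on $\binom{[n]}{x}$ given $X=x\le\alpha(K)$, and comparing with $Y$ then costs at most $\PP(X>\alpha(K))$; your justification of the invariance through the joint law of $(\sigma(K),X,\sigma(I))$ is more careful than the paper's one-line version. The only difference is packaging: the paper writes Step~2 as an explicit coupling ($Y=I$ on $\{X\le\alpha(K)\}$, otherwise a uniform $X$-subset), which also gives $I\subseteq Y$ pointwise as used in its subsequent remark, whereas you do the equivalent size-by-size summation directly.
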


\begin{proof}
The key point is that, for every $0\le t\le n$, conditional on
$X=t$ and $\alpha(K)\ge t$, the set $I$ is uniform among all $t$-subsets
of $[n]$. Indeed, the distribution of $G_{n,s}$ is unchanged by relabelling
the vertices, so every $t$-subset has the same probability of being chosen
as $I$.

Define an auxiliary set $Y$ as follows. If $X\le\alpha(K)$, set $Y=I$.
Otherwise, choose $Y$ uniformly from all $X$-subsets of $[n]$. By the
observation above, conditional on $X=t$, the set $Y$ is uniform on
$\binom{[n]}t$. Since $X\sim\operatorname{Bin}(n,p)$, it follows that
$Y$ is a $p$-random subset of $[n]$.

Since  $I=Y$ whenever $X\le\alpha(K)$, we have 
\[
\PP(I\in\mathcal P)
\ge
\PP(Y\in\mathcal P)-\PP(I\ne Y)
\ge
\PP(Y\in\mathcal P)-\PP(X>\alpha(K)).\]
\end{proof}
In particular, since $I \subseteq Y$,
$
\PP(S\subseteq I)\le p^{|S|}
$
for every $S\subseteq[n]$, which has exactly the same non-concentration
form as \eqref{eq:spread}. This is the point at which the distributional
viewpoint from the Ramsey argument reappears: there the spread condition is
imposed on a probability weight over candidate cycles, whereas here it is
imposed on the random reservoir in which the cycle will be found. The
additional comparison with a $p$-random set allows us to apply the following
result of Diskin, Krivelevich, Markbreit and Zhukovskii~\cite[Remark~3.1]{DKMZ}.

Recall that an $(n,r,\lambda)$-graph is 
 an \(r\)-regular graph on \(n\) vertices whose adjacency eigenvalues \(r=\lambda_1\ge\lambda_2\ge\cdots\ge\lambda_n\) satisfy \(\max_{2\le i\le n}|\lambda_i|\le\lambda\).

\begin{lemma}[\cite{DKMZ}]
\label{lem:DKMZ-hole}
There are constants $\eta,c>0$ and $0<\eps<1/4$ such that the following holds for
every sufficiently large fixed $r$. Let $R$ be an $(n,r,\lambda)$-graph
with $\lambda/r\le\eta$, and let $Y$ be obtained by choosing each vertex
independently with probability $p=(1+\eps)/r$. Then, with probability
$1-o(1)$, $R[Y]$ contains an induced cycle of order at least $cn/r$.
\end{lemma}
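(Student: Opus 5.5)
The plan is to run an exploration process on $R[Y]$ in which the vertices of $Y$ are revealed only when needed. The process builds a long induced path, which is then closed into an induced cycle using a second, sprinkled round of randomness.

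First, I would split the randomness: write $p=(1+\eps)/r$ and view $Y=Y_1\cup Y_2$, where $Y_1$ is $p_1$-random with $p_1=(1+\eps/2)/r$ and $Y_2$ is an independent $p_2$-random set with $p_2\approx\eps/(2r)$.

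On $Y_1$, I would run a depth-first search restricted to induced paths. The search keeps a current path $P$ and a set $D$ of dead vertices, namely those already queried, or adjacent to an interior vertex of $P$. It tries to extend $P$ from its endpoint $v$ by querying the neighbours $w\in N_R(v)\setminus D$ one at a time for membership in $Y_1$. A queried $w$ that lies in $Y_1$ is appended to $P$, and the neighbours of the previous endpoint become dead. If every neighbour of $v$ has been queried without success, the search backtracks.

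The main step is to show that, with probability $1-o(1)$, the path reaches length $L=cn/r$ at some point.

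\begin{itemize}
\item As long as $|P|\le L$ and the number of queries is $O(n/r)$, the dead set has size $|D|\le Lr+O(n/r)=O(cn)$.
\item By the expander mixing lemma with $\lambda/r\le\eta$, all but a small fraction of vertices $v$ have at least $(1-O(c)-O(\eta))r$ neighbours outside $D$.
\item Hence each extension step is a Bernoulli trial sequence with mean about $(1+\eps/2)(1-O(c+\eta))>1+\eps/4$ successes.
\item For $c,\eta$ small relative to $\eps$, the standard DFS and random-walk argument (as in Krivelevich--Sudakov) then says a supercritical exploration cannot keep backtracking: within $O(n/r)$ queries it has found $\Omega(n/r)$ vertices of $Y_1$. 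The sequence of path lengths is a walk with positive drift, so the current path at some time has length $\Omega(n/r)$.
\end{itemize}

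Second, I would apply the same argument to get many long induced paths, or one path $P$ of length $2L$. I would take its two end-segments $P_a,P_b$, each of length $L/4$, separated by a middle portion of length at least $L$.

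To close the cycle, I would use $Y_2$ together with the expansion of $R$.

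\begin{itemize}
\item Using the same DFS idea in $R[Y_2\setminus N_R(\text{middle})]$, look for a short induced path from a vertex adjacent to exactly one vertex of $P_a$ to a vertex adjacent to exactly one vertex of $P_b$, avoiding neighbours of the rest of $P$.
\item The connecting path, together with the portion of $P$ between the two attachment points, forms a cycle of length at least $L$.
\item Inducedness must then be checked only on the new vertices, and the avoidance constraint handles this.
\item Expansion of $(n,r,\lambda)$-graphs guarantees that the sets of such \emph{good attachment} vertices have linear size (order $n$ after excluding $O(cn)$ dead vertices). The sprinkled $Y_2$-exploration from them therefore grows to linear size and meets its counterpart, by the mixing lemma.
\end{itemize}

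I expect the main obstacle to be the first step: keeping the dead set, which grows by about $r$ per path vertex, uniformly small while the DFS backtracks. This is what forces $c$ to be small compared with $\eps$, and $\eta$ to be small too. The closing step is the second difficulty, because a single attachment vertex adjacent to several path vertices would create chords. I would first try to handle this by restricting to attachment vertices with exactly one neighbour on $P$, and counting those via the mixing lemma.
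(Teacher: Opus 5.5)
The paper does not prove this lemma; it quotes it from \cite[Remark~3.1]{DKMZ}. So your sketch has to stand on its own, and the step that fails outright is the closing of the cycle. The sprinkled set $Y_2$ has density $p_2\approx\eps/(2r)$, so a vertex has on average only $rp_2\approx\eps/2<1$ neighbours in $Y_2$. Hence $R[Y_2]$ is subcritical, explorations in it die after $O(\log n)$ vertices, and nothing in $R[Y_2]$ can ``grow to linear size''. Even $R[Y]$ has only about $(1+\eps)n/r$ vertices. The most you can hope for is a bounded connector, such as a single $Y_2$-vertex or a $Y_2$-edge between the two attachment sets. But then you need those sets to have size $\Theta(n)$, i.e.\ $|N_R(P_a)|=\Omega(r|P_a|)$ with most of $N_R(P_a)$ seeing exactly one vertex of $P$. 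The mixing lemma cannot certify this under $\lambda\le\eta r$. Its error term $\lambda\sqrt{|S||T|}$ beats the main term $r|S||T|/n$ as soon as $|S||T|<\eta^2n^2$, so for $|S|=\Theta(n/r)$ it only yields $|N_R(S)|\gtrsim|S|/\eta^2=o(n)$.

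The first step rests on the same misuse of the mixing lemma. The bookkeeping is also off by a factor of $r$. Each query succeeds with probability about $1/r$, so $\Omega(n/r)$ successes need $\Theta(n)$ queries, not $O(n/r)$. The failed queries alone therefore put linearly many vertices into $D$, and you would have to cap the budget at a small multiple of $\eps n$. More importantly, from ``all but a small fraction of vertices have few neighbours in $D$'' you cannot infer that \emph{each extension step} is supercritical. The vertices with at least $(|D|/n+t)r$ neighbours in $D$ can number up to $(\eta/t)^2|D|=\Theta(n)$. The DFS visits only $\Theta(n/r)$ vertices, each chosen adaptively as a neighbour of the very vertices whose neighbourhoods form $D$, so nothing in your argument keeps the endpoints out of this exceptional set. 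The global Krivelevich--Sudakov route (no edges between the explored set $S$ and the untouched set $T$) also breaks down. For $|S|=\Theta(\eps n/r)$ one has $\lambda\sqrt{|S||T|}\approx\eta n\sqrt{\eps r}\gg\eps n\approx r|S||T|/n$. What is missing is a way to control the overlaps the exploration actually meets. That would use the independence of $Y$ from $R$ together with averaged local statistics of $R$ (for instance closed-walk counts $\operatorname{tr}(A_R^\ell)\le r^\ell+n\lambda^\ell$, with $A_R$ the adjacency matrix of $R$), rather than expansion of sets of size $n/r$. This is the genuinely nontrivial part, which the paper does not reprove but imports from \cite{DKMZ}.
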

\subsection{Combining two random regular graphs}

\begin{proof}[Proof of \cref{thm:random-regular-hole}]
The upper bound is immediate from the standard estimate of Frieze and
{\L}uczak~\cite{FL}:
$\alpha(G_{n,d})=O(n\log d/d)$, since every induced cycle of order $\ell$
contains an independent set of order $\floor{\ell/2}$. 

It remains to prove the lower bound. Let $\eps,\eta,c>0$ be given by
\cref{lem:DKMZ-hole}. Choose an even integer $r$ satisfying
$\left|r-d/\log d\right|\le2$, put $s=d-r$, and set
$p=(1+\eps)/r$. Since $2\mid nd$ and $r$ is even, both $nr$ and $ns$
are even. Let $H\sim G_{n,r}$ and $K\sim G_{n,s}$ be independent.
Construct $X$ and $I$ as in \cref{lem:product-reservoir}, using additional
random choices independent of $H$.

\begin{claim}\label{clm:reservoir-hole}
With probability $1-o(1)$, the graph $H[I]$ contains an induced cycle
of order at least $cn/r$.
\end{claim}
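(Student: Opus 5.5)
We condition on $H$ and apply \cref{lem:product-reservoir} to the pair $(K,X)$, which is independent of $H$. The event that $H$ is an $(n,r,\lambda)$-graph with $\lambda/r\le\eta$ holds with probability $1-o(1)$. Indeed, Friedman's theorem gives $\lambda(H)\le 2\sqrt{r-1}+o(1)$ with high probability. Since $r$ is of order $d/\log d$ and $d$ is sufficiently large, $r$ is large, so $2\sqrt{r-1}+1\le\eta r$. On this event, let $\mathcal P_H\subseteq 2^{[n]}$ be the family of vertex sets $U$ such that $H[U]$ contains an induced cycle of order at least $cn/r$. This is a deterministic family once $H$ is fixed.

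By \cref{lem:DKMZ-hole}, applied to $R=H$ and $p=(1+\eps)/r$, a $p$-random set $Y$ satisfies $\PP(Y\in\mathcal P_H)=1-o(1)$. We need this uniformly over the admissible graphs $H$. The $o(1)$ in \cref{lem:DKMZ-hole} depends only on $n$ for fixed $r$ and the stated constants, so this holds if the lemma is read as a statement about every $(n,r,\lambda)$-graph. Then \cref{lem:product-reservoir}, applied conditionally on $H$ with $\mathcal P=\mathcal P_H$, gives
\[
\PP(I\in\mathcal P_H\mid H)\ge 1-o(1)-\PP(X>\alpha(K)).
\]
This uses that $(K,X)$ and the choice of $I$ are independent of $H$. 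Averaging over $H$ and adding the $o(1)$ failure probability of pseudorandomness reduces the claim to showing $\PP(X>\alpha(K))=o(1)$.

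This last step is the main quantitative point. We have $X\sim\operatorname{Bin}(n,p)$ with mean $(1+\eps)n/r$, and Chernoff gives $X\le(1+2\eps)n/r$ with high probability. Since $r\le d/\log d+2$, this is at most $(1+2\eps)(1+o_d(1))\,n\log d/d$. For the independence number we invoke the known lower bound for random regular graphs (Frieze--{\L}uczak):
\[
\alpha(G_{n,s})\ge(2-o_s(1))\frac{n\log s}{s}
\]
with high probability. Here $s=d-r=(1-o_d(1))d$ and $\log s=(1-o_d(1))\log d$, so $\alpha(K)\ge(2-o_d(1))n\log d/d$. Since $\eps<1/4$, we have $(1+2\eps)<3/2<2-o_d(1)$ for $d$ sufficiently large. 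Hence $\PP(X>\alpha(K))=o(1)$, which proves the claim.

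If only the weaker bound $\alpha\ge(1-o(1))n\log s/s$ were available, the fix would be to take $r$ slightly larger, say $r\approx 2d/\log d$. This keeps $r=\Theta(d/\log d)$ and hence the final order $n/r=\Theta(n\log d/d)$.
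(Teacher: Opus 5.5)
Your proof is correct and follows the paper's argument essentially step for step: Friedman's theorem makes $H$ an $(n,r,O(\sqrt r))$-graph, \cref{lem:DKMZ-hole} applies to a $p$-random set, \cref{lem:product-reservoir} is applied with $H$ fixed (using that $(K,X,I)$ is independent of $H$), and Chernoff plus Frieze--{\L}uczak give $\PP(X>\alpha(K))=o(1)$. The only slip is one of direction: bounding $n/r$ from above needs $r\ge d/\log d-2$, not $r\le d/\log d+2$, and this lower bound is equally available from the choice $|r-d/\log d|\le 2$.
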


\begin{proof}
We first check that $\PP(X>\alpha(K))=o(1)$. By the estimate of Frieze
and {\L}uczak~\cite{FL} and Chernoff's inequality, for every sufficiently
large fixed $d$, with probability $1-o(1)$, \vspace{-0.1cm}
\[
X
\le
\frac{(1+2\eps)n}{r}
<
(2-o_s(1))\frac{n\log s}{s}
\le
\alpha(K).
\]
Here the middle inequality follows from $r\sim d/\log d$ and $s\sim d$
as $d\to\infty$. Thus $\PP(X>\alpha(K))=o(1)$.

By Friedman's eigenvalue theorem~\cite{Friedman}, with probability
$1-o(1)$ the graph $H$ is an $(n,r,3\sqrt r)$-graph. For sufficiently
large $d$, we have $3/\sqrt r\le\eta$. Fix such a graph $H$, and let
$\mathcal P$ be the family of vertex sets $U\subseteq[n]$ for which
$H[U]$ contains an induced cycle of order at least $cn/r$.

By \cref{lem:DKMZ-hole}, a $p$-random set $Y$ belongs to $\mathcal P$
with probability $1-o(1)$. Since $I$ is independent of $H$,
\cref{lem:product-reservoir} gives
\[
\PP(I\in\mathcal P)
\ge
\PP(Y\in\mathcal P)-\PP(X>\alpha(K))
=
1-o(1).
\]
This proves the claim.
\end{proof}

By Claim~\ref{clm:reservoir-hole}, with probability $1-o(1)$ there is an
induced cycle $C\subseteq H[I]$ of order at least $cn/r$. Since $I$ is
independent in $K$, we have $K[I]=\varnothing$. Hence $C$ is also induced
in $H\cup K$. Since $r\sim d/\log d$,
\(
|C|
\ge
{cn}/{r}
=
\Omega\left({n\log d}/{d}\right).
\)

\begin{claim}\label{clm:regular-transfer}
The same lower bound holds with high probability in $G_{n,d}$.
\end{claim}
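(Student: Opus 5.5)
We transfer the high-probability statement from $H\cup K$ to $G_{n,d}$ by the standard contiguity and conditioning argument for sums of random regular graphs, following Wormald~\cite{WormaldSurvey}.

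Let $\mathcal E$ be the event that $H\sim G_{n,r}$ and $K\sim G_{n,s}$ are edge-disjoint. On $\mathcal E$ the union $H\cup K$ is $d$-regular. Let $\mathcal Q$ be the (bad) property of $d$-regular graphs on $[n]$ of having no induced cycle of order at least $cn/r$. We have shown $\PP(H\cup K\in\mathcal Q)=o(1)$.

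\textbf{Step 1: disjointness has probability bounded away from zero.} For fixed $r,s$, the number of common edges of two independent random regular graphs converges in distribution to a Poisson variable with mean $(r-1)(s-1)/2$. Hence
\[
\PP(\mathcal E)\to e^{-(r-1)(s-1)/2}>0
\]
as $n\to\infty$. This is a constant depending only on $d$. Consequently
\[
\PP(H\cup K\in\mathcal Q\mid\mathcal E)\le\frac{o(1)}{\PP(\mathcal E)}=o(1).
\]

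\textbf{Step 2: identify the conditional law with $G_{n,d}$ up to contiguity.} Conditional on $\mathcal E$, the random graph $H\cup K$ is distributed as the random graph $G_{n,r}\oplus G_{n,s}$ of~\cite{WormaldSurvey}. Wormald's contiguity theorem states that $G_{n,r}\oplus G_{n,s}$ is contiguous to $G_{n,d}$ for fixed $r,s\ge1$ with $r+s=d\ge3$, along $n$ with $2\mid nr$ and $2\mid ns$. Our choices of $r$ even and $2\mid nd$ ensure these parity conditions. Contiguity means exactly that any event of probability $o(1)$ in one model has probability $o(1)$ in the other. Therefore $\PP(G_{n,d}\in\mathcal Q)=o(1)$, so $G_{n,d}$ has, with high probability, an induced cycle of order at least $cn/r=\Omega(n\log d/d)$. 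Together with the upper bound, this proves \cref{thm:random-regular-hole}.

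\textbf{Main obstacle.} The only delicate point is to cite the contiguity statement in the precise form we use. The concern is that the union in~\cite{WormaldSurvey} is defined as a superposition conditioned on having no multiple edges, and we must check that this agrees with our conditioning on $\mathcal E$. It does: for simple $H$ and $K$, the superposition has no multiple edges precisely when $H$ and $K$ are edge-disjoint.

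If a version of the theorem for this exact decomposition is not available, the fallback is direct. Using the configuration model, one shows that every $d$-regular graph $G$ arises from roughly the same number of pairs $(H,K)$ with $H\cup K=G$. The ratio between this count and its mean is controlled by the small-cycle counts of $G$ via the small subgraph conditioning method. This yields contiguity, and only the weaker direction (bad events transfer from $H\cup K$ to $G_{n,d}$) is needed.
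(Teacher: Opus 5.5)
Your proof is correct and follows the same route as the paper. You bound $\PP(\mathcal E)$ below by a positive constant via the Poisson limit for the number of common edges, pass the $o(1)$ failure probability to the conditional law, and invoke Wormald's contiguity $G_{n,r}\oplus G_{n,s}\approx G_{n,d}$, which the paper cites as \cite[Corollary~4.17]{WormaldSurvey}.

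One harmless slip: the Poisson mean is $rs/2$, not $(r-1)(s-1)/2$, since $\E|E(H)\cap E(K)|=\binom n2\frac{r}{n-1}\cdot\frac{s}{n-1}\to rs/2$. Your formula would wrongly make two random perfect matchings disjoint with high probability. The argument only uses that the limit is positive, so nothing breaks.
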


\begin{proof}
Let $\mathcal E=\{E(H)\cap E(K)=\varnothing\}$. For fixed $r$ and $s$,
a standard Poisson approximation gives
$
\PP(\mathcal E)
=
e^{-rs/2}+o(1)
=
\Theta_{r,s}(1).
$
Thus $\PP(\mathcal E)$ is bounded below by a positive constant. Since the
desired hole exists in $H\cup K$ with probability $1-o(1)$ before
conditioning, it also exists with probability $1-o(1)$ conditional on
$\mathcal E$.

On $\mathcal E$, the union $H\cup K$ is $d$-regular, although it need
not be uniformly distributed. By
\cite[Corollary~4.17]{WormaldSurvey}, every graph property holding with
high probability for this conditioned union also holds with high
probability for $G_{n,d}$. Hence
\(
L_{\mathrm{hole}}(G_{n,d})
=
\Omega\left({n\log d}/{d}\right)
\)
with high probability.
\end{proof}

The theorem follows from Claim~\ref{clm:regular-transfer}.
\end{proof}

\section{Concluding remarks}\label{sec:concluding}
We conclude with several related problems and directions. 

\smallskip
\noindent\textbf{Length of the forced odd hole.}
Theorem~\ref{thm:gadget} shows that a graph with $e^{O(k)}$ edges can force
a monochromatic induced odd cycle of length $O(\log k)$. It is natural to
ask whether this logarithmic bound can be improved. A recent result of
Steiner~\cite{Steiner} shows that, for every fixed $p$, the multicolour
Ramsey number of $\{C_3,C_5,\ldots,C_{2p+1}\}$ is superexponential in $k$.
Consequently, no graph with $e^{O(k)}$ edges can force a monochromatic odd
cycle of bounded length, even without requiring the cycle to be induced.
Thus the length of the odd hole in Theorem~\ref{thm:gadget} must tend to
infinity with $k$.

\begin{question}
What is the smallest possible order of a function $f(k)$ for which there
exists an $e^{O(k)}$-edge graph whose every $k$-edge-colouring contains a
monochromatic induced odd cycle $C_h$ with $5\le h\le f(k)$?
In particular, is $f(k)=o(\log k)$ possible?
\end{question}

\smallskip
\noindent\textbf{Long induced subdivisions.} Javadi, Kohayakawa and Miralaei~\cite{JKM} recently studied the induced
size--Ramsey numbers of long subdivisions. They proved that, for every
$k,D\ge2$, if $H$ has maximum degree at most $D$ and $H^\sigma$ is an
$n$-vertex subdivision of $H$ in which every
subdivision path has length at least $c(k,D)\log_D n$, then
$
\widehat R_{\mathrm{ind}}(H^\sigma;k)
\le e^{O(k\log k)}D^9\log D\,n.
$

Our odd-hole gadget suggests a possible route to improving the dependence
on $k$. Indeed, an induced odd cycle $C_{2a+1}$ contains, between two
suitable vertices, two induced paths of lengths $a$ and $a+1$. Thus
Theorem~\ref{thm:gadget} provides the type of local length flexibility
needed in their subdivision framework, at a cost of only $e^{O(k)}$.
It seems plausible that incorporating our gadget into their argument could give
$
\widehat R_{\mathrm{ind}}(H^\sigma;k)
\le e^{O_D(k)}n
$.

\smallskip
\noindent\textbf{The leading constant in random regular graphs.}
For the binomial random graph $G(n,d/n)$, Dragani\'c, Glock and
Krivelevich~\cite{DGKhole} proved that, for sufficiently large $d$, the
largest induced cycle has order
$
(2+o_d(1)){n(\log d)}/{d}
$
with high probability. Our result determines the correct order in the
random regular model, but not the leading constant.

\begin{question}
What is the asymptotic leading constant in
$
L_{\mathrm{hole}}(G_{n,d})
=
\Theta\left({n(\log d)}/{d}\right)?
$
\end{question}

\section*{Acknowledgements}
Lanchao Wang was supported by the National Key R\&D Program of China under
grant number 2024YFA1013900, the NSFC
under grant number 12471327,  the China Scholarship Council, and the
Institute for Basic Science (IBS-R029-C4).  Xiaolin Wang was supported by  the NSFC under grant number 12401447.  

The
authors used AI tools as research partners for discussion, formula
derivations, literature search, and writing assistance. In the final manuscript, the central idea of using well-distributed auxiliary objects to control inducedness was developed by the authors. In the Ramsey application, the idea of using spread distributions to control possible chords was also developed by the authors. Many of the technical components, in particular the container-based compression argument, were developed with the assistance of AI tools. All mathematical arguments and proofs in the final
manuscript were written and verified by the authors.
\bibliographystyle{abbrv}
\bibliography{reference}
\end{document}